\theoremstyle{definition}
\numberwithin{equation}{section}
   \def\MR#1{}
\theoremstyle{plain}
\newtheorem{thm}{Theorem}[section]		
\newtheorem{prop}[thm]{Proposition}
\newtheorem{cor}[thm]{Corollary}
\newtheorem{lem}[thm]{Lemma}
\theoremstyle{definition}
\theoremstyle{remark}
\newtheorem{rmk}{Remark}[section]
\newtheorem*{ac}{Acknowledgements}
\newcommand{\zz}{\mathbb{Z}}
\newcommand{\rr}{\mathbb{R}}
\DeclareMathOperator{\card}{Card}
\DeclareMathOperator{\cl}{CL}
\DeclareMathOperator{\met}{Met}
\newcommand{\ult}[2]{\mathrm{UMet}(#1; #2)}
\DeclareMathOperator{\metdis}{\mathcal{D}}
\DeclareMathOperator{\umetdis}{\mathcal{UD}}
\DeclareMathOperator{\yodiam}{diam}
\newcommand{\yosub}{\subset}
\newcommand{\yotdim}[1]{\mathrm{dim}_{\mathrm{T}}(#1)}
\newcommand{\yorppp}[1]{\omega_{#1}}
\newcommand{\yormainmap}{\Phi}
\newcommand{\yorqmap}{\pi}
\newcommand{\yorind}[1]{\Theta(#1)}
\newcommand{\yorrho}[2]{\varrho_{#1, #2}}
\newcommand{\yorclosed}{F}
\newcommand{\yorqsp}[2]{#1_{/#2}}
\newcommand{\yorsubmap}{\Xi}
\newcommand{\yorsubmapq}{\Sigma}
\newcommand{\yorbd}[1]{\mathbf{#1}}
\newcommand{\yorop}[1]{\alpha #1}
\newcommand{\yorubdim}[1]{\overline{\dim}_{\mathrm{B}}(#1)}
\newcommand{\yorhdim}[1]{\dim_{\mathrm{H}}(#1)}
\newcommand{\yorpdim}[1]{\dim_{\mathrm{P}}(#1)}
\newcommand{\yoradim}[1]{\dim_{\mathrm{A}}(#1)}
\newcommand{\yordimset}[2]{\mathrm{FD}(#1; #2)}
\newcommand{\yordimsetq}[2]{\mathrm{TD}(#1; #2)}
\newcommand{\yordimsetu}[3]{\mathrm{UFD}(#1; #2; #3)}
\newcommand{\yordimsetqu}[3]{\mathrm{UTD}(#1; #2; #3)}
\newcommand{\yorlset}{\mathcal{A}}
\newcommand{\yorrset}{\mathcal{B}}
\newcommand{\yorzeron}{\yorbd{n}_{0}}
\newcommand{\yorzeroz}{\yorbd{z}_{0}}
\newcommand{\stst}[2]{#1[#2]}
\DeclareMathOperator{\ubdim}{\overline{\dim}_{B}}
\DeclareMathOperator{\hdim}{\dim_{H}}
\DeclareMathOperator{\pdim}{\dim_{P}}
\DeclareMathOperator{\adim}{\dim_{A}}
\newcommand{\mnum}{\mathrm{N}}
\newcommand{\yorcompset}{\mathrm{Cm}}
\newcommand{\yorcompsetu}{\mathrm{UCM}}
\newcommand{\yorpropset}{\mathrm{Pr}}
\newcommand{\yorpropsetu}{\mathrm{UPr}}
\newcommand{\yorflatmap}{\yormainmap}
\newcommand{\yorflatmapx}{r}
\newcommand{\yorflatmapy}{\yorqmap}
\newcommand{\yorfmet}{\lambda}
\newcommand{\yorvmet}{v}
\newcommand{\yorsetballb}{\widetilde{B}}
\newcommand{\yorsetballu}{\widetilde{U}}
\newcommand{\yorex}{\widetilde{E}}
\begin{document}

\title[Factorization]
{
A factorization of 
metric spaces
}
\author[Yoshito Ishiki]
{Yoshito Ishiki}

\address[Yoshito Ishiki]
{\endgraf
Photonics Control Technology Team
\endgraf
RIKEN Center for Advanced Photonics
\endgraf
2-1 Hirasawa, Wako, Saitama 351-0198, Japan}

\email{yoshito.ishiki@riken.jp}

\date{\today}
\subjclass[2020]{Primary 54E35, 
Secondary 
51F99}
\keywords{Retraction, Zero-dimensional spaces, Embedding}

\begin{abstract}
We first prove that 
for every metrizable space $X$, 
for every  closed 
subset $F$ whose complement is zero-dimensional, 
the space $X$ can be 
embedded
as a closed subset  into 
 a  product of the closed subset $F$ and 
a metrizable zero-dimensional space. 
Using this theorem, we next  show 
the existence of extensors 
 of metrics and ultrametrics, which 
 preserve properties of metrics  such as the completeness, the properness, 
being an ultrametrics, 
 its fractal dimensions, and 
 large scale structures. 
This result contains
some of  the author's extension 
theorems of ultrametrics. 
\end{abstract}

\maketitle

\section{Introduction}\label{sec:intro}

Under certain assumptions, 
for a topological space $X$, and for a retract $E$ of $X$, 
Michael \cite{MR1909003} 
established the existence of a perfect map 
$f$ (or a closed map)
 from 
$X$ into the 
 product space $E\times \rr$ such that 
$f(a)=(a, 0)$ for all $a\in E$. 
Michael also  
characterized the property 
 that a retract $E$ becomes 
a
perfect (or  closed) retract  of $X$ using 
the behavior of $E\times \{0\}$ in  a
product space $E\times \rr$. 

As an improvement of Michael's results, 
in this paper,  
we shall prove that 
for every metrizable space $X$, for every 
closed 
subset $F$ whose complement is zero-dimensional
(i.e., it has covering dimension $0$),  
there exists a  topological embedding 
$\yormainmap$ of the 
whole space $X$ into the  product space
$F\times \Lambda$
 of the closed subset $F$ and 
a metrizable zero-dimensional space $\Lambda$
(see Theorem \ref{thm:main1}), where the factor 
$\Lambda$ is a metric quotient space identifying  
$F$ as a single point. 
Moreover, 
the embedding 
 $\yormainmap$ is a closed map  and 
there exists $\theta\in \Lambda$ such that 
$\yormainmap(a)=(a, \theta)$ for all $a\in F$. 
Based on this theorem, we can roughly 
consider that 
the space $X$ is  factorized into a product of  $F$ and some 
$0$-dimensional metrizable space,  and 
we can take a retraction from $X$ into $F$ as 
a projection in a product space in some sense. 
 
To prove  Theorem \ref{thm:main1}, 
we utilize the 
Engelking's  theorem  \cite{MR239571} on the existence of retraction, stating  that
if $X$ is a metrizable space and 
$F$ is a non-empty closed subset
 for which 
$X\setminus F$ is $0$-dimensional, 
then there exists a retraction 
$f\colon X\to F$, which is a closed map. 
The construction of this  retraction 
includes stronger properties than being a closed map, 
which we will use  in the present  paper. 

Before explaining applications of our embedding  theorem, 
we introduce some notions on spaces of metrics and 
ultrametrics. 
A metric $d$ on a set $X$ is said to be an \emph{ultrametric} if 
it satisfies the so-called strong triangle inequality 
$d(x, y)\le d(x, z)\lor d(z, y)$ for all $x, y, z\in X$, 
where the symbol ``$\lor$'' stands for the maximum operator on 
$\rr$. 
Let $S$ be a subset of 
$[0, \infty)$ with $0\in S$, 
and $X$ be a metrizable space. 
We denote by $\met(X; S)$
(resp.~$\ult{X}{S}$) the set of all 
metrics (resp.~ultrametrics) on $X$ generating the same topology of $X$
and 
taking values in $S$. 
We simply write $\met(X):=\met(X; [0, \infty))$. 
Let $\metdis_{X}$ be the supremum distance on 
$\met(X)$. Namely, 
$\metdis_{X}(d, e)=\sup_{x, y\in X}|d(x, y)-e(x, y)|$. 
Then $\metdis_{X}$ is a metric on $\met(X)$
taking values in $[0, \infty]$. 
Using open balls,  as is the case of  ordinary metric spaces, we can introduce a topology 
on $\met(X)$ with $\metdis_{X}$.
In what follows, we always consider that 
$\met(X)$  is equipped with the 
topology generated by $\metdis_{X}$. 
Similarly, for $d, e\in \ult{X}{S}$, 
we define $\umetdis_{X}^{S}(d, s)$ by the 
infimum $\epsilon \in S\setminus \{0\}$ such that 
$d(x, y)\le e(x, y)\lor \epsilon$ and 
$e(x, y)\le d(x, y)\lor \epsilon$ for all 
$x, y\in X$. 
Then $\umetdis_{X}^{S}$ is an ultrametric on 
$\ult{X}{S}$
taking values in $\cl(S)\sqcup\{\infty\}$. We consider that $\ult{X}{S}$ is equipped with the topology generated by $\ult{X}{S}$.

In \cite{Ishiki2021ultra}, 
the author showed that for
an ultrametrizable space $X$, 
for a closed subset $F$, 
for a subset $S$ of $[0, \infty)$ with $0\in S$, 
and 
for every $d\in \ult{F}{S}$, 
there exists $D\in \ult{X}{S}$ such that 
$D|_{F^{2}}=d$. 
This result  is an ultrametric analogue of 
Hausdorff's metric extension theorem 
\cite{Ha1930}. 
Some authors have investigated 
variants of the Hausdorff's metric extension 
theorem 
(see \cite{NN1981}, \cite{MR3135687},  and \cite{MR3090172}).

In \cite{ishiki2022highpower}, 
the author generalized the results \cite{NN1981}
on the existence of simultaneous  extensions of metrics
 to the theory of metrics 
taking values in general linearly ordered Abelian groups. 
This theorem also contains the existence of 
simultaneous extensions of ultrametrics. 
Namely,  for every ultrametrizable space 
$X$, a closed subset $F$, and a subset $S$ of $[0, \infty)$ with $0\in S$, 
the author obtained an 
isometric extensor 
$\Upsilon\colon \ult{F}{S}\to \ult{X}{S}$. 
For extensors of (ultra)metrics, 
we refer the readers to  \cite{tymchatyn2005note},
\cite{stasyuk2009continuous} and 
\cite{NN1981}. 

A metric $d$ on a set $X$ 
(or a metric space $(X, d)$) is said to be \emph{proper} if 
all closed balls of $(X, d)$ are compact. 
In \cite{Ishiki2022proper}, 
the author proved that a proper metric 
$d\in \met(F)$ can be extended to 
a proper metric $D\in\met(X)$, 
and also proved an  analogue of this  extension theorem  for proper ultrametrics. 

In the present paper, 
using our theorem on embedding 
a metrizable space into a product space 
(Theorem \ref{thm:main1}), 
we will  unify 
the author's extension theorems on 
ultrametrics explained above.
Namely, using a structure of a product space, 
under the same assumptions to Theorem \ref{thm:main1}, 
we obtain extensors 
$\yorsubmap, \yorsubmapq\colon \met(F)\to \met(X)$  which preserve  properties of 
$d\in \met(F)$  such as the completeness, the properness, 
being an  ultrametric, 
its fractal dimensions, 
and large scale structures
(see Theorems \ref{thm:subARC} and 
\ref{thm:subNARC}).

The organization of this paper is as follows: 
In Section \ref{sec:pre}, 
we prepare some notions on metric spaces. 
We also investigate basic properties of 
metric quotient spaces $\yorqsp{X}{F}$
and 
we review the construction of Engelking's 
retraction in \cite{MR239571}. 
Section \ref{sec:proof} is devoted to 
the proof of  Theorem \ref{thm:main1}, which is our first main result. 
In Section \ref{sec:app}, 
we first explain the definitions  of the fractal dimensions. 
As applications of Theorem \ref{thm:main1}, 
we shall prove our remaining main results,  Theorems \ref{thm:subARC} and 
\ref{thm:subNARC}.

\section{Preliminaries}\label{sec:pre}
In this section, we prepare some notions and notations. 
\subsection{Generalities}
For a metric space $(X, d)$, 
and for $x\in X$ and 
$\epsilon\in (0, \infty)$, 
we denote by $B(x, \epsilon; d)$ 
(resp.~$U(x, \epsilon; d)$)
the closed (resp.~open) ball of $(X, d)$ centered 
at $x$ with radius $\epsilon$. 
We  simply represent  it as
$B(x, \epsilon)$ 
(resp.~$U(x, \epsilon)$) when no confusion can arise. 
For a subset $A$ of $X$, we define 
$\yorrho{d}{A}\colon X\to [0, \infty)$ by 
$\yorrho{d}{A}(x)=\inf_{y\in A}d(x, y)$. 
Notice that $\yorrho{d}{A}$ is continuous. 
Moreover, it is $1$-Lipschitz. 

A map $f\colon X\to Y$ between topological spaces 
$X$ and $Y$ is said to be 
\emph{open} (resp.~\emph{closed}) if 
for every open (resp.~closed) subset $A$ of $X$, 
the image  $f(A)$ is open (resp.~closed). 

Let $(X, d)$ and $(Y, e)$ 
be metric spaces. 
We define product metrics 
$d\times_{1}e$ and 
$d\times_{\infty}e$
 by 
\[
(d\times_{1}e)((x, y), (u, v))=d(x, u)+e(y, v)
\]
and 
\[
d\times_{\infty}e((x, y), (u, v))=
d(x, u)\lor e(y, v).
\] 
Notice that $d\times_{1}e$ and 
$d\times_{\infty}e$ generate the same topology 
of $X\times Y$. 
Remark that  if $e$ and $d$ are ultrametrics, 
then so is $d\times_{\infty}e$. 
We can also define the $\ell^{p}$-product metric 
$d\times_{p}e$ for all $p\in (1, \infty)$. 
However, we omit it since we only consider the 
$\ell^{1}$ and $\ell^{\infty}$-product metrics in 
this paper. 

\begin{lem}\label{lem:prodproper}
Let $(X, d)$ and $(Y, e)$ be metric spaces. 
If both $d$ and $e$ are proper, 
then so are $d\times_{1} e$ and $d\times_{\infty} e$. 
\end{lem}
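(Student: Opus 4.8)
The plan is to show that the two product metrics are proper by relating their closed balls to products of closed balls in the factors, then invoking compactness.

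The plan is to reduce properness to compactness of closed balls, exploiting the explicit form of the two product metrics. The key observation is that a closed ball for either product metric is controlled by the product of closed balls in the two factors, and a finite product of compact sets is compact.

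First I would treat the $\ell^{\infty}$-product. For a center $(x_{0}, y_{0})$ and radius $r\in(0,\infty)$, the condition $(d\times_{\infty}e)((x,y),(x_{0},y_{0}))\le r$ is by definition $d(x,x_{0})\lor e(y,y_{0})\le r$, which holds exactly when $d(x,x_{0})\le r$ and $e(y,y_{0})\le r$. Hence the closed ball for $d\times_{\infty}e$ coincides with the set $B(x_{0}, r; d)\times B(y_{0}, r; e)$. Since $d$ and $e$ are proper, both factors are compact, and a product of two compact spaces is compact, so this closed ball is compact. This shows $d\times_{\infty}e$ is proper.

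Next I would handle the $\ell^{1}$-product. For each $(x,y)$ one has $d(x,x_{0})\le d(x,x_{0})+e(y,y_{0})$ and likewise $e(y,y_{0})\le d(x,x_{0})+e(y,y_{0})$, so the inequality $(d\times_{1}e)((x,y),(x_{0},y_{0}))\le r$ forces both $d(x,x_{0})\le r$ and $e(y,y_{0})\le r$. Therefore the closed ball for $d\times_{1}e$ of radius $r$ is contained in the closed ball $B(x_{0}, r; d)\times B(y_{0}, r; e)$, which is compact as shown above. Being a closed ball in a metric space, it is a closed subset of that compact set (the distance function is continuous), hence compact. Thus $d\times_{1}e$ is proper as well.

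There is no serious obstacle here; the argument is essentially the identification of the $\ell^{\infty}$-ball with a product of balls together with the bound $d\times_{\infty}e\le d\times_{1}e$, which lets the $\ell^{1}$ case piggyback on the $\ell^{\infty}$ case. The only point demanding a little care is to note that properness is about \emph{closed} balls, so that the $\ell^{1}$-ball, as a closed subset of a compact set, is itself compact.
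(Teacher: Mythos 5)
Your proposal is correct and follows essentially the same route as the paper: both arguments reduce properness to the compactness of $B(a,\epsilon;d)\times B(b,\epsilon;e)$ and note that the product-metric balls sit inside it. Your version is in fact slightly more careful, since you make explicit the identification of the $\ell^{\infty}$-ball with the product of balls and the closedness of the $\ell^{1}$-ball (needed to conclude compactness from the containment), a point the paper's proof leaves implicit.
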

\begin{proof}
For all 
$\epsilon\in (0, \infty)$, 
and for every point 
$p=(a, b)\in X\times Y$, 
the sets
$B(p, \epsilon; d\times_{1}e)$ and 
$B(p, \epsilon; d\times_{\infty}e)$ are
contained in 
$B(a, \epsilon; d)\times B(b, \epsilon; e)$. 
Since $d$ and $e$ are proper, 
the set $B(a, \epsilon; d)\times B(b, \epsilon; e)$ is 
compact. 
Then
$B(p, \epsilon; d\times_{1}e)$ and 
$B(p, \epsilon; d\times_{\infty}e)$ are compact. 
Hence 
 $d\times_{1}e$ and $d\times_{\infty}e$ are 
proper. 
\end{proof}

\subsection{Topological dimension}

A subset of a topological space is said to be 
\emph{clopen} if it is closed and open in the space. 
For a topological space $X$, 
we write $\yotdim{X}=0$ if 
$X\neq \emptyset$  and 
every finite covering of $X$ has 
a refinement covering consisting of 
finitely many mutually disjoint open subsets of $X$. 
A topological space 
$X$ is said to be \emph{ultranormal} if 
for each pair of  disjoint closed subsets $A$ and $B$ of 
$X$, there exists a clopen subset $C$ of $X$ such that 
$A\yosub C$ and $B\cap C=\emptyset$. 
Remark that a topological space is ultranormal 
if and only if it has large inductive dimension
(usually denoted by $\mathrm{Ind}$) $0$. 
 For more discussions on topological dimensions, 
 we refer the readers to \cite{Cdimension}
 and 
 \cite{MR0394604}. 
 
The proof of the next proposition is 
presented in  \cite[Proposition 2.9]{Cdimension}
and \cite[Proposition 2.3]{MR0394604}. 
\begin{prop}\label{prop:zeroultra}
A
non-empty topological space $X$ is 
 ultranormal if and only if $\yotdim{X}=0$. 
\end{prop}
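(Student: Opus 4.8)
The plan is to prove the two implications separately: the forward direction is a single application of the covering condition, whereas the converse is handled by induction on the number of sets in a finite open cover. For the implication $\yotdim{X}=0 \To X$ is ultranormal, I would take disjoint closed sets $A, B\subset X$ and apply the hypothesis to the finite open cover $\{X\setminus A,\ X\setminus B\}$, which covers $X$ precisely because $A\cap B=\emptyset$. This yields a refining cover $\{U_{1},\dots,U_{n}\}$ consisting of finitely many pairwise disjoint open sets. Since these sets are pairwise disjoint and cover $X$, each satisfies $U_{i}=X\setminus\bigcup_{j\neq i}U_{j}$ and is therefore clopen, and each is contained in $X\setminus A$ or in $X\setminus B$. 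Setting $C=\bigcup\{\,U_{i} : U_{i}\cap A\neq\emptyset\,\}$, I would check that $C$ is a finite union of clopen sets, hence clopen; that $A\subset C$, since every point of $A$ lies in some $U_{i}$ meeting $A$; and that $C\cap B=\emptyset$, since any $U_{i}$ meeting $A$ is not contained in $X\setminus A$ and is thus contained in $X\setminus B$. This $C$ is exactly the clopen separator demanded by ultranormality.

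For the converse, that $X$ ultranormal implies $\yotdim{X}=0$, I would argue by induction on the size $n$ of a given finite open cover $\{V_{1},\dots,V_{n}\}$, the case $n=1$ being immediate via the partition $\{X\}$. For the inductive step I would peel off a clopen piece lying inside $V_{1}$: the sets $A=X\setminus(V_{2}\cup\cdots\cup V_{n})$ and $B=X\setminus V_{1}$ are closed and disjoint, because $A\subset V_{1}$, so ultranormality produces a clopen $C$ with $A\subset C\subset V_{1}$. Then $X\setminus C$ is clopen and is covered by $\{V_{2},\dots,V_{n}\}$, and, being a clopen (hence closed) subspace of $X$, it is again ultranormal. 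Applying the inductive hypothesis to the $(n-1)$-element open cover $\{\,V_{i}\cap(X\setminus C)\,\}_{i=2}^{n}$ of $X\setminus C$ gives a finite clopen partition $\{W_{1},\dots,W_{m}\}$ refining it. Since $X\setminus C$ is clopen in $X$, each $W_{j}$ is clopen in $X$, and $\{C,W_{1},\dots,W_{m}\}$ is then the desired partition of $X$ into finitely many pairwise disjoint open sets refining $\{V_{1},\dots,V_{n}\}$.

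The points requiring care lie entirely in the converse. First, I must verify that a clopen (indeed, merely closed) subspace of an ultranormal space is itself ultranormal, so that the induction may be iterated on $X\setminus C$; this follows by intersecting a separating clopen subset of $X$ with the subspace. Second, I must dispose of the degenerate case $X\setminus C=\emptyset$, in which $C=X\subset V_{1}$ and the singleton partition $\{X\}$ already refines the cover. The forward direction, by contrast, poses no genuine obstacle once the two-element cover $\{X\setminus A,\ X\setminus B\}$ is chosen.
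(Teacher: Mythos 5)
Your proof is correct, and it is worth noting that the paper itself contains no argument for this proposition: it defers entirely to \cite[Proposition 2.9]{Cdimension} and \cite[Proposition 2.3]{MR0394604}. Your write-up is essentially the standard argument given in such references, so you have in effect supplied the proof the paper omits. Both directions check out. For $\yotdim{X}=0 \To$ ultranormal, the two-element cover $\{X\setminus A,\ X\setminus B\}$ is indeed an open cover precisely because $A\cap B=\emptyset$, each member $U_{i}$ of a disjoint open refinement is clopen since $X\setminus U_{i}=\bigcup_{j\neq i}U_{j}$ is open, and your set $C=\bigcup\{\,U_{i}: U_{i}\cap A\neq\emptyset\,\}$ separates as claimed because any $U_{i}$ meeting $A$ cannot refine into $X\setminus A$ and so lies in $X\setminus B$. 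For the converse, the induction is sound: $A=X\setminus(V_{2}\cup\cdots\cup V_{n})$ and $B=X\setminus V_{1}$ are disjoint closed sets since the $V_{i}$ cover $X$, and you correctly identify and discharge the two delicate points, namely that ultranormality passes to closed (in particular clopen) subspaces by intersecting a separating clopen set of $X$ with the subspace, and that the case $X\setminus C=\emptyset$ must be treated separately because the paper's definition of $\yotdim{\cdot}=0$ applies only to non-empty spaces. Two cosmetic remarks: the paper's definition says ``every finite covering'' without the word ``open,'' and you have (correctly, and consistently with the cited sources) read this as finite \emph{open} coverings; and in the final partition $\{C, W_{1},\dots,W_{m}\}$ the set $C$ may be empty when $A=\emptyset$, which is harmless since an empty member can simply be discarded.
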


\begin{rmk}
In what follows, 
without referring to Proposition \ref{prop:zeroultra}, 
we use the equivalence between 
$\yotdim{X}=0$ and the ultranormality. 
\end{rmk}

A subset $S$ of $[0, \infty)$ is 
said to be 
\emph{characteristic} if 
$0\in S$ and for all $t\in (0, \infty)$, 
there exists $s\in S\setminus \{0\}$ such that 
$s\le t$. 
The next proposition is deduced from 
\cite[Proposition 2.14]{Ishiki2021ultra} and 
\cite[Theorem II]{MR80905}. 

\begin{prop}\label{prop:S-ult}
For every  characteristic subset $S$ of $[0, \infty)$, 
a metrizable space $X$ satisfies 
$\ult{X}{S}\neq \emptyset$ if and only if 
$\yotdim{X}=0$. 
\end{prop}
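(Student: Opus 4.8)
The plan is to establish the two implications separately, in each case leaning on de Groot's characterization of ultrametrizability (Theorem II of \cite{MR80905}), which asserts that a metrizable space carries a compatible ultrametric exactly when its covering dimension vanishes, together with a rescaling device that forces the values of an ultrametric into a prescribed characteristic set.

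For the implication $\ult{X}{S}\neq\emptyset \To \yotdim{X}=0$, almost no work is needed: any $d\in\ult{X}{S}$ is in particular an element of $\ult{X}{[0,\infty)}$, since $S\yosub[0,\infty)$. Hence $X$ is ultrametrizable, and de Groot's theorem immediately yields $\yotdim{X}=0$.

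For the converse $\yotdim{X}=0 \To \ult{X}{S}\neq\emptyset$, de Groot's theorem first supplies some $d\in\ult{X}{[0,\infty)}$, and the task is to replace $d$ by a topologically equivalent ultrametric whose values lie in $S$. I would snap values downward: set $\phi(0)=0$ and $\phi(t)=\sup\{\,s\in S : s\le t\,\}$ for $t\in(0,\infty)$. The characteristic hypothesis guarantees that for $t>0$ the set $\{\,s\in S : s\le t\,\}$ contains a positive element, so $\phi(t)>0$; and $\phi(t)\le t$ forces $\phi(t)\to 0$ as $t\to 0^{+}$. Since $\phi$ is non-decreasing, we have $\phi(a\lor b)=\phi(a)\lor\phi(b)$, so $\phi\circ d$ satisfies the strong triangle inequality, is symmetric, takes values in $S$, and vanishes precisely on the diagonal; thus it is an $S$-valued ultrametric.

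The main point — and the only place where the characteristic property is genuinely used — is verifying that $\phi\circ d$ generates the original topology. The inequality $\phi(t)\le t$ shows every $d$-ball sits inside the corresponding $(\phi\circ d)$-ball. For the reverse comparison, given $\epsilon>0$ I would invoke the characteristic property to choose $s_{0}\in S\setminus\{0\}$ with $s_{0}\le\epsilon$; then $d(x,y)\ge\epsilon$ forces $\phi(d(x,y))\ge s_{0}$, so the $(\phi\circ d)$-ball of radius $s_{0}$ is contained in the $d$-ball of radius $\epsilon$. This two-sided comparison gives topological equivalence, whence $\phi\circ d\in\ult{X}{S}$ and $\ult{X}{S}\neq\emptyset$. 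This rescaling is precisely the content recorded in \cite[Proposition 2.14]{Ishiki2021ultra}, which may be cited in place of the explicit construction.
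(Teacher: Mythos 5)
Your overall architecture is exactly the paper's: the paper obtains this proposition by combining de Groot's Theorem II of \cite{MR80905} (ultrametrizability is equivalent to $\yotdim{X}=0$ for metrizable spaces) with the rescaling recorded in \cite[Proposition 2.14]{Ishiki2021ultra}, and your forward implication is correct as written. However, your explicit rescaling contains a genuine flaw: the downward snap $\phi(t)=\sup\{\,s\in S : s\le t\,\}$ need not take values in $S$, because a characteristic set is not assumed to be closed, and the supremum need not be attained. Concretely, $S=\{0\}\cup\{\,1/n : n\ge 1\,\}\cup\{\,1-1/n : n\ge 2\,\}$ is characteristic, yet $\phi(1)=\sup\{1-1/n\}=1\notin S$; nothing prevents the ultrametric $d$ supplied by de Groot's theorem from realizing the value $1$, in which case $\phi\circ d$ takes a value outside $S$ and the conclusion $\phi\circ d\in\ult{X}{S}$ fails. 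In general your $\phi$ only lands in the closure of $S$, which is weaker than what the proposition asserts.

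The repair is to snap \emph{upward along a chosen sequence} rather than downward by suprema: since $S$ is characteristic, pick a strictly decreasing sequence $s_{n}\in S\setminus\{0\}$ with $s_{n}\to 0$, and define $\phi(0)=0$, $\phi(t)=s_{n}$ for $t\in(s_{n+1}, s_{n}]$, and $\phi(t)=s_{0}$ for $t>s_{0}$. This $\phi$ is non-decreasing (so, as you correctly note, $\phi(a\lor b)=\phi(a)\lor\phi(b)$ and the strong triangle inequality survives), vanishes exactly at $0$, and now genuinely takes values in $S$; topological equivalence follows because for $n\ge 1$ one has $\phi(d(x,y))\le s_{n}$ if and only if $d(x,y)\le s_{n}$, so the $(\phi\circ d)$-balls and the $d$-balls interleave. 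With this modification your argument goes through; alternatively, your closing fallback of simply citing \cite[Proposition 2.14]{Ishiki2021ultra} is legitimate and is precisely what the paper does, since that proposition is this rescaling carried out correctly.
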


For a topological space $T$, 
and for a subset $E$ of $T$, 
we denote by $\cl(E)$ (resp.~$\partial E$)
the closure of $E$ (resp.~the boundary of $E$) in 
$T$. 
\begin{lem}\label{lem:clopen}
Let $X$ be a metrizable space, 
and $A$ be a 
non-empty closed subset of $X$ such that 
$\yotdim{X\setminus A}= 0$. 
If $U$ is  an open subset of $X$ with
$A\yosub U$, 
then 
 there exists a clopen subset $V$ of $X$ 
 such that $A\yosub V$ and $V\yosub U$. 
\end{lem}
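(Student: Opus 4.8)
The plan is to reduce the problem to a separation carried out inside the zero-dimensional space $X\setminus A$, using a continuous Urysohn-type function to manufacture a buffer around $A$. First I would fix a metric $d$ for $X$ and set $C=X\setminus U$, so that $C$ is closed and, since $A\subseteq U$, disjoint from $A$. If $C=\emptyset$ then $U=X$ and $V=X$ works, so I assume $C\neq\emptyset$; moreover $\yotdim{X\setminus A}=0$ forces $X\setminus A\neq\emptyset$, so both distance functions involved are finite and never simultaneously zero. I would then define $\phi\colon X\to[0,1]$ by $\phi=\yorrho{d}{A}/(\yorrho{d}{A}+\yorrho{d}{C})$. The denominator is strictly positive because $\yorrho{d}{A}(x)=0$ exactly when $x\in\cl(A)=A$ and $\yorrho{d}{C}(x)=0$ exactly when $x\in\cl(C)=C$, and these sets are disjoint. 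Hence $\phi$ is continuous with $\phi|_{A}=0$ and $\phi|_{C}=1$.

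Next I would pass to $Y=X\setminus A$, which is ultranormal by Proposition \ref{prop:zeroultra}. Inside $Y$ I consider two closed subsets: $C$ itself (which lies in $Y$ and is closed there) and the buffer $B=\{x\in Y : \phi(x)\le 1/4\}$. These are disjoint, since $\phi\equiv 1$ on $C$, so the ultranormality of $Y$ supplies a clopen subset $K$ of $Y$ with $B\subseteq K$ and $K\cap C=\emptyset$. I would then propose $V=A\cup K$ as the desired set. Immediately $A\subseteq V$, and $V\cap C=(A\cap C)\cup(K\cap C)=\emptyset$ gives $V\subseteq X\setminus C=U$.

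It remains to verify that $V$ is clopen in $X$, and this is where the argument is most delicate. Closedness is routine: since $K\subseteq Y$ one has $X\setminus V=Y\setminus K$, which is open in $Y$ and hence in $X$ because $Y$ is open in $X$. The main obstacle is openness at the points of $A$, because the clopenness of $K$ relative to $Y$ says nothing about the behaviour of $V$ across the boundary of $A$. Here the buffer $B$ does the work: for $a\in A$, the continuity of $\phi$ at $a$ (where $\phi(a)=0$) yields an open set $N\ni a$ in $X$ with $\phi<1/4$ on $N$, so $N\cap Y\subseteq B\subseteq K$ and therefore $N\subseteq A\cup K=V$; together with the openness of $K$ in $X$ this shows $V$ is open. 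I expect the only genuinely subtle choice to be that of $\phi$ itself: a buffer defined by a fixed threshold on $\yorrho{d}{A}$ alone would fail when $A$ and $C$ are asymptotically close (with distance zero), whereas the relative function $\phi$ separates them correctly in every case.
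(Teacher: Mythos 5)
Your proof is correct, and it takes a genuinely different route from the paper's, though both ultimately rest on the ultranormality of $X\setminus A$ (Proposition \ref{prop:zeroultra}). The paper first uses normality of $X$ to interpose an open set $G$ with $A\yosub G$ and $\cl(G)\yosub U$, then applies zero-dimensionality only to the \emph{boundary} $\partial G$, which is a closed subset of $X\setminus A$, capturing it in a clopen subset $Q$ of $X\setminus A$ with $Q\yosub U\cap(X\setminus A)$, and sets $V=\cl(G)\cup Q$; openness there follows from $V=G\cup Q$, and closedness from $\cl(Q)\setminus Q\yosub A\yosub\cl(G)$. You instead manufacture an explicit Urysohn function $\phi=\yorrho{d}{A}/(\yorrho{d}{A}+\yorrho{d}{C})$ with $C=X\setminus U$ and apply ultranormality to the entire closed buffer $B=\{\,x\in X\setminus A\mid \phi(x)\le 1/4\,\}$ against $C$, taking $V=A\cup K$. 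What your version buys: it is self-contained at the metric level (the metric supplies $\phi$, so no appeal to abstract normality or shrinking of neighborhoods), and the openness of $V$ at points of $A$ becomes a direct continuity estimate, valid even in the degenerate case where the buffer is empty (then $N\cap(X\setminus A)=\emptyset$ forces $N\yosub A$, so the argument survives). What the paper's version buys: ultranormality is invoked only on the thinner set $\partial G$, and the resulting $V$ contains a full closed neighborhood $\cl(G)$ of $A$, which can be convenient elsewhere. Your closing observation — that a fixed threshold on $\yorrho{d}{A}$ alone would fail when $\yorrho{d}{}$-distance between $A$ and $C$ is zero, while the relative function $\phi$ always separates — is exactly the right diagnosis of why the quotient is needed; your explicit handling of $C=\emptyset$ and your use of the paper's convention that $\yotdim{X\setminus A}=0$ entails $X\setminus A\neq\emptyset$ are both careful and correct.
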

\begin{proof}
Since $X$ is normal, 
there exists an open subset $G$ of $X$ with 
$A\yosub G$ and $\cl(G)\yosub U$. 
Since $\partial G\yosub X\setminus A$
and $\yotdim{X\setminus A}=0$, 
we can find a clopen subset 
$Q$ of $X\setminus A$ such that 
$\partial G\yosub Q$ and 
$Q\yosub U\cap (X\setminus A)$. 
Put $V=\cl(G)\cup Q$. 
Then  $V$ is a clopen  set as required. 
\end{proof}

We say that a topological space is a \emph{Cantor space} if it is homeomorphic to the (middle-third)
Cantor set. Note that all Cantor spaces are 
homeomorphic to each other. 
\begin{prop}\label{prop:sepembzero}
If $X$ is a separable metrizable space with 
$\yotdim{X}=0$,
then $X$ can be topologically embedded into 
a Cantor space. 
\end{prop}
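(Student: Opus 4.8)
The plan is to realize $X$ inside the model Cantor space $\{0,1\}^{\nn}$ (with the product topology), which is homeomorphic to the middle-third Cantor set, and then invoke the fact already recorded above that all Cantor spaces are homeomorphic. The whole argument rests on producing a countable \emph{clopen} base for $X$ and feeding its indicator functions into the standard embedding lemma.

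First I would produce a countable clopen base. Since $X$ is separable and metrizable, it is second countable; fix a countable base $\{W_k\}_{k\in\nn}$. Zero-dimensionality gives, via Proposition \ref{prop:zeroultra}, that $X$ is ultranormal. Applying ultranormality to the disjoint closed sets $\{x\}$ and $X\setminus U$ (the latter possibly empty, in which case $X$ itself is clopen) shows that for every $x$ and every open $U\ni x$ there is a clopen $V$ with $x\in V\yosub U$; thus the clopen sets form a base. A routine diagonal extraction then trims this to a countable clopen base $\{V_n\}_{n\in\nn}$: for each pair $(i,j)$ for which some clopen set $V$ satisfies $W_i\yosub V\yosub W_j$, choose one such $V$, and one checks that the resulting countable family is still a base.

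Next I would define $f\colon X\to\{0,1\}^{\nn}$ by letting the $n$-th coordinate of $f(x)$ equal $1$ if $x\in V_n$ and $0$ otherwise. Because each $V_n$ is clopen, every coordinate map $\chi_{V_n}$ is continuous, so $f$ is continuous. It is injective since, given $x\neq y$, a basic clopen set separates them (take open $U\ni x$ with $y\notin U$, then $V_n$ with $x\in V_n\yosub U$). Moreover, $f$ is a topological embedding because the family $\{\chi_{V_n}\}$ separates points from closed sets: if $C$ is closed and $x\notin C$, then some $V_n$ satisfies $x\in V_n\yosub X\setminus C$, so $\chi_{V_n}(x)=1$ while $\chi_{V_n}$ vanishes on $C$. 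This is exactly the hypothesis of the embedding lemma for maps into a product.

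The step I expect to require the most care is verifying that $f$ is a homeomorphism onto its image rather than merely a continuous injection; this is precisely the separation-of-points-from-closed-sets condition, and it is where the \emph{base} property of $\{V_n\}$ (not just point separation) is essential. Obtaining the countable clopen base is the only other place where zero-dimensionality enters, through ultranormality. Once the embedding into $\{0,1\}^{\nn}$ is in hand, the conclusion is immediate, since $\{0,1\}^{\nn}$ is a Cantor space.
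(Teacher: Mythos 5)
Your proof is correct and takes essentially the same route as the paper, which likewise embeds $X$ into $\{0,1\}^{\zz_{\ge 0}}$ via the characteristic functions of a countable base of clopen sets. The paper treats the existence of such a base and the embedding-lemma verification as well known and only sketches them; the details you supply (ultranormality producing clopen neighborhoods, the trimming to a countable clopen base, and the separation of points from closed sets) are all accurate.
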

\begin{proof}
The proposition is well-known (see  for example 
\cite[Exercise 9.14]{Cdimension}). 
We briefly review a construction of an embedding. 
Let $\{U_{i}\}_{i\in \zz_{\ge 0}}$ be a  base of $X$
consisting of clopen subsets of $X$. Let $f_{i}\colon X\to \{0, 1\}$ be the  characteristic map of $U_{i}$, i.e., 
$f_{i}(x)=1$ if and only if $x\in U_{i}$. 
We define $g\colon X\to \{0, 1\}^{\zz_{\ge 0}}$ by 
$g(x)=(f_{i}(x))_{i\in \zz_{\ge 0}}$. 
Then $g$ is a topological embedding, and 
$\{0, 1\}^{\zz_{\ge 0}}$ is a Cantor space. 
\end{proof}

\subsection{Metric quotient  spaces}
In this section, we construct a factor of 
a target  product space in our first main theorem 
(Theorem \ref{thm:main1}).
That factor  is a metric quotient space 
identifying a given closed subset as a single point. 
To consider metric quotient space, we use 
Hausdorff's  construction  
\cite{Ha1930} of a metric vanishing on a given 
closed subset.
Let $X$ be a metrizable space, 
and 
$F$ be a  non-empty closed subset of $X$. 
Take a point $\yorppp{F}$ satisfying that 
$\yorppp{F}\not \in X\setminus F$. 
We put $\yorqsp{X}{F}=(X\setminus F)\sqcup \{\yorppp{F}\}$. 
For the sake of convenience, 
we choose  $\yorppp{F}$ as a point of $F$. 
For a metric $d\in \met(X)$, 
we define   a symmetric function 
$\yorind{d}\colon (\yorqsp{X}{F})^{2}\to [0, \infty)$ by 
$\yorind{d}(x, y)=\min\{d(x, y), \yorrho{d}{F}(x)+\yorrho{d}{F}(y)\}$. 
Notice that $\yorind{d}(x, \yorppp{F})=\yorrho{d}{F}(x)$
for all $x\in \yorqsp{X}{F}$
since $\yorppp{F}\in F$. 
In Proposition 
 \ref{prop:inducedmetric}, 
we will prove that 
$\yorind{d}$ is actually  a metric on 
$\yorqsp{X}{F}$. 
We define a map 
$\yorqmap\colon X\to \yorqsp{X}{F}$ by 
$\yorqmap(x)=x$ if 
$x\in X\setminus F$; otherwise, 
$\yorqmap(x)=\yorppp{F}$. 
Remark that the metric structure of $(\yorqsp{X}{F}, \yorind{d})$ does 
not depend on the choice of $\yorppp{F}$.

\begin{prop}\label{prop:inducedmetric}
Let $X$ be a metrizable space,  and 
$F$ be a 
non-empty closed subset of $X$. 
If $d\in \met(X)$, 
then the following statements are true:
\begin{enumerate}[label=\textup{(\arabic*)}]
\item\label{item:theta1}
The function $\yorind{d}$ is a metric on $\yorqsp{X}{F}$. 
\item\label{item:theta2}
 The map $\yorqmap \colon X\to \yorqsp{X}{F}$ is
 $1$-Lipschitz. In particular, it is 
continuous. 
\item\label{item:theta3}
 The restricted map 
$\yorqmap|_{X\setminus F}\colon (X\setminus  F)\to (\yorqsp{X}{F}\setminus \{\yorppp{F}\})$ is a  homeomorphism. 
\end{enumerate}
\end{prop}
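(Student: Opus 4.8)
The plan is to verify the three statements in order, establishing the metric axioms for $\yorind{d}$ in (1) first, since (2) and (3) rely on $(\yorqsp{X}{F}, \yorind{d})$ already being a metric space. Throughout I would abbreviate $\rho := \yorrho{d}{F}$ and use two facts recorded in the preliminaries: $\rho$ is $1$-Lipschitz, and (because $F$ is closed) $\rho(x) > 0$ precisely when $x \in X\setminus F$. I would also use the identity $\yorind{d}(x, \yorppp{F}) = \rho(x)$ noted just before the statement.

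For (1), symmetry is immediate since both $d(x,y)$ and $\rho(x)+\rho(y)$ are symmetric. For positive-definiteness I would split on the location of $x,y \in \yorqsp{X}{F}$: if one argument is $\yorppp{F}$, then $\yorind{d}$ equals $\rho$ of the other argument, which vanishes exactly when that argument lies in $F$, i.e. equals $\yorppp{F}$; if both lie in $X\setminus F$, then $\rho(x)+\rho(y) > 0$, so $\yorind{d}(x,y)=0$ forces $d(x,y)=0$ and hence $x=y$. The substantive point, and the main obstacle, is the triangle inequality $\yorind{d}(x,z) \le \yorind{d}(x,y) + \yorind{d}(y,z)$. I would prove it via the elementary observation that $\min\{a,b\}\le c$ holds as soon as $a\le c$ or $b\le c$; writing $A=\yorind{d}(x,y)$ and $B=\yorind{d}(y,z)$, it then suffices to show that for every triple at least one of $d(x,z)$ and $\rho(x)+\rho(z)$ is bounded by $A+B$. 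Splitting into the four cases according to which term realizes $A$ and which realizes $B$: when both are realized by the $d$-terms, the triangle inequality for $d$ gives $d(x,z)\le d(x,y)+d(y,z)=A+B$; in the remaining three cases I would bound $\rho(x)+\rho(z)\le A+B$ using the $1$-Lipschitz estimates $\rho(x)\le \rho(y)+d(x,y)$ and $\rho(z)\le \rho(y)+d(y,z)$ together with $\rho(y)\ge 0$.

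For (2), I would check $\yorind{d}(\yorqmap(x),\yorqmap(y)) \le d(x,y)$ for $x,y\in X$ by cases on membership in $F$. If $x,y\in X\setminus F$ the left side is $\yorind{d}(x,y)$, which is $\le d(x,y)$ directly from the defining minimum; if $x\in X\setminus F$ and $y\in F$ it equals $\rho(x) = \inf_{w\in F} d(x,w) \le d(x,y)$; and if $x,y\in F$ it is $0$. This yields the $1$-Lipschitz bound, and continuity is automatic.

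For (3), continuity of $\yorqmap|_{X\setminus F}$ is inherited from (2) by restriction, and the map is a bijection onto $\yorqsp{X}{F}\setminus\{\yorppp{F}\}=X\setminus F$. The remaining task is continuity of the inverse, i.e. that on $X\setminus F$ the metrics $\yorind{d}$ and $d$ induce the same topology. I would fix $x_{0}\in X\setminus F$, set $\epsilon := \rho(x_{0}) > 0$, and observe that for $y\in X\setminus F$ with $\yorind{d}(x_{0},y) < \epsilon$ the defining minimum cannot be realized by $\rho(x_{0})+\rho(y)\ge \rho(x_{0})=\epsilon$, whence $\yorind{d}(x_{0},y)=d(x_{0},y)$; conversely $d(x_{0},y)<\epsilon \le \rho(x_{0})+\rho(y)$ again forces $\yorind{d}(x_{0},y)=d(x_{0},y)$. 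Thus the $\yorind{d}$- and $d$-balls about $x_{0}$ of radius at most $\epsilon$ coincide within $X\setminus F$, so the inverse is continuous and $\yorqmap|_{X\setminus F}$ is a homeomorphism, completing the proof.
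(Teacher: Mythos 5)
Your proof is correct and takes essentially the same route as the paper: your four-case analysis of the triangle inequality (the $d$--$d$ case via the triangle inequality for $d$, the other three via the $1$-Lipschitz bounds $\varrho_{d,F}(x)\le \varrho_{d,F}(y)+d(x,y)$ and $\varrho_{d,F}(y)\ge 0$) is precisely the paper's list of inequalities (i)--(iv), and your ball-coincidence argument for (3) matches the paper's observation that $U(x,\epsilon; d)=U(\pi(x),\epsilon; \Theta(d))$ whenever $\epsilon<\varrho_{d,F}(x)$. The only difference is cosmetic: you spell out positive-definiteness and the membership cases in (2), which the paper treats as immediate.
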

\begin{proof}
The proof of \ref{item:theta1} is 
essentially the same as in  \cite{Ha1930}. 
To prove that $\yorind{d}$ is a metric, 
it suffices to show that $\yorind{d}$ satisfies the 
triangle inequality. 
Recalling the definition of $\yorrho{d}{F}(x)$, 
we notice that for all $x, y, z\in \yorqsp{X}{F}$, 
all of the following inequalities are true:
\begin{enumerate}[label=\textup{(\roman*)}]
\item $d(x, y)\le d(x, z)+d(z, y)$;
\item $\yorrho{d}{F}(x)+\yorrho{d}{F}(y)\le (\yorrho{d}{F}(x)+\yorrho{d}{F}(z))+d(z, y)$;
\item $\yorrho{d}{F}(x)+\yorrho{d}{F}(y)
\le d(x, z)+(\yorrho{d}{F}(z)+\yorrho{d}{F}(y))$;
\item $\yorrho{d}{F}(x)+\yorrho{d}{F}(y)
\le (\yorrho{d}{F}(x)+\yorrho{d}{F}(z))
+(\yorrho{d}{F}(z)+\yorrho{d}{F}(y))$. 
\end{enumerate}
Then we conclude that 
$\yorind{d}$ satisfies the triangle inequality.

We now verify  \ref{item:theta2}. 
For all $x, y\in X$, 
we have 
\[
\yorind{d}(\yorqmap(x), \yorqmap(y))
=\min\{d(x, y), \yorrho{d}{F}(x)+\yorrho{d}{F}(y)\}.
\] 
Thus we obtain $\yorind{d}(\yorqmap(x), \yorqmap(y))
\le d(x, y)$. 
This means that $\yorqmap$ is $1$-Lipschitz.

We next show that 
$\yorqmap|_{X\setminus F}\colon 
(X\setminus F)\to 
(\yorqsp{X}{F}\setminus \{\yorppp{F}\})$ is a  homeomorphism. 
Remark that 
$\yorqsp{X}{F}\setminus \{\yorppp{F}\}=X\setminus F$
as a set. 
For all $x\in X\setminus F$, 
 every number  $\epsilon \in (0, \yorrho{d}{F}(x))$ satisfies 
that 
$U(x, \epsilon; d)=U(\yorqmap(x), \epsilon; \yorind{d})$. 
Thus, the restricted map 
$\yorqmap|_{X\setminus F}$ is a homeomorphism. 
\end{proof}

\begin{rmk}Let $(X, d)$ be a metric space, 
and 
$F$ be a closed subset of $X$. 
It 
is not always true  that 
$\yorqmap\colon X\to \yorqsp{X}{F}$ is 
 open or closed. 
 We give an example. 
Put $X=\rr^{2}$ and 
$F=\rr\times \{0\}$. 
Define a metric on $d$ on $X$ $d(x, y)=|x_{1}-y_{1}|+|x_{2}-y_{2}|$, where 
$x=(x_{1}, x_{2})$ and $y=(y_{1}, y_{2})$. 
Let $A=\{\, (x_{1}, x_{2})\in X\mid x_{2}=(|x_{1}|+1)^{-1}\, \}$. 
Then $A$ is closed in $X$, and 
$\yorqmap(A)$ is not closed in $\yorqsp{X}{F}$ 
since $\yorppp{F}\not \in \yorqmap(A)$ and  
$U(\yorppp{F}, \epsilon)\cap \yorqmap(A)\neq \emptyset$ for all $\epsilon\in (0, \infty)$. 
Similarly, the set 
$O=
\{\, (x_{1}, x_{2})\in X\mid x_{2}<(|x_{1}|+1)^{-1}\, \}$ is 
 open in $X$ and $\yorqmap(O)$ is not open in 
 $\yorqsp{X}{F}$. 
 Namely, in this case, the map $\yorqmap$ is 
 neither open nor closed. 
\end{rmk}

The next lemma follows
from the fact that a continuous image of 
a separable (resp.~compact) space is 
separable (resp.~compact). 
\begin{prop}\label{prop:sepcpt}
Let $X$ be a metrizable space, 
and $F$ be a non-empty closed subset of $X$. 
If $d\in \met(X)$, 
then the following statements  are true: 
\begin{enumerate}[label=\textup{(\arabic*)}]
\item\label{item:26:sep}
 If $(X, d)$ is separable, then so is $(\yorqsp{X}{F}, \yorind{d})$. 
\item\label{item:26:cpt}
If $(X, d)$ is compact, then so is $(\yorqsp{X}{F}, \yorind{d})$. 
\end{enumerate}
\end{prop}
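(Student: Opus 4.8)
The plan is to realize $(\yorqsp{X}{F}, \yorind{d})$ as a continuous image of $(X, d)$ under the quotient map $\yorqmap$, and then to invoke the standard facts that a continuous surjective image of a separable space is separable, and that a continuous image of a compact space is compact. This is precisely the strategy announced just before the statement, so the work reduces to assembling the two ingredients: continuity and surjectivity of $\yorqmap$.

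First I would record that $\yorqmap\colon X\to \yorqsp{X}{F}$ is continuous; this is exactly Proposition \ref{prop:inducedmetric}\ref{item:theta2}, where $\yorqmap$ is shown to be $1$-Lipschitz as a map from $(X, d)$ to $(\yorqsp{X}{F}, \yorind{d})$. Next I would check surjectivity, which is immediate from the definition of $\yorqmap$: it fixes each point of $X\setminus F$ and sends every point of the non-empty set $F$ to $\yorppp{F}$, so that $\yorqmap(X)=\yorqsp{X}{F}$.

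For \ref{item:26:sep}, given a countable dense subset $D\yosub X$, continuity of $\yorqmap$ yields $\yorqsp{X}{F}=\yorqmap(X)=\yorqmap(\cl(D))\yosub \cl(\yorqmap(D))$, so the countable set $\yorqmap(D)$ is dense in $(\yorqsp{X}{F}, \yorind{d})$, which is therefore separable. For \ref{item:26:cpt}, the compactness of $(\yorqsp{X}{F}, \yorind{d})=\yorqmap(X)$ follows at once, as the continuous image of a compact space is compact.

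Since separability and compactness are topological invariants and $\yorqmap$ is continuous for the relevant metric topologies, I do not anticipate any genuine obstacle here. The only point that requires a brief verification is the surjectivity of $\yorqmap$, while the entire substance of the argument is supplied by the continuity established in Proposition \ref{prop:inducedmetric}\ref{item:theta2}.
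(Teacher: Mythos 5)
Your proof is correct and follows exactly the route the paper takes: the paper justifies Proposition \ref{prop:sepcpt} by observing that $\yorqmap$ is a continuous surjection (continuity being Proposition \ref{prop:inducedmetric}\ref{item:theta2}) and that continuous images of separable (resp.\ compact) spaces are separable (resp.\ compact). Your write-up merely spells out the surjectivity and density details that the paper leaves implicit; there is no gap.
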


Let $(X, d)$ be a metric space. 
For a subset $A$ of $X$, 
and for $\epsilon\in (0, \infty)$, 
we define  
$\yorsetballb(A, \epsilon; d)=
\{\, x\in X\mid \yorrho{d}{A}(x)\le \epsilon\, \}$
and  
$\yorsetballu(A, r; d)=\{\, x\in X\mid \yorrho{d}{A}(x)<\epsilon\, \}$. 
Remark that 
$\yorsetballb(A, \epsilon; d)$ 
is closed  and 
$\yorsetballu(A, \epsilon; d)$ is open in $(X, d)$
since $\yorrho{d}{A}$ is continuous on $(X, d)$. 
\begin{prop}\label{prop:inducedzero}
Let $X$ be a metrizable space, 
and 
$F$ be a non-empty closed subset of $X$. 
If $d\in \met(X)$ and  $\yotdim{X\setminus F}=0$, then 
$\yotdim{\yorqsp{X}{F}}=0$ with respect to 
the topology generated by $\yorind{d}$. 
\end{prop}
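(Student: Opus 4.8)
The plan is to apply Proposition~\ref{prop:zeroultra}: since $Y:=(\yorqsp{X}{F},\yorind{d})$ is non-empty (it contains $\yorppp{F}$), it suffices to prove that $Y$ is ultranormal, that is, that any two disjoint closed subsets $A,B$ of $Y$ can be separated by a clopen set. Two structural facts drive everything. First, by Proposition~\ref{prop:inducedmetric}\ref{item:theta3} the open subspace $Y\setminus\{\yorppp{F}\}$ is homeomorphic to $X\setminus F$, which is ultranormal because $\yotdim{X\setminus F}=0$; hence I may separate disjoint closed subsets of $X\setminus F$ by sets clopen in $X\setminus F$. Second, near $\yorppp{F}$ the metric is governed by $\yorind{d}(\yorppp{F},x)=\yorrho{d}{F}(x)$, so for any $S\yosub X\setminus F$ one has $\yorppp{F}\in\cl(S)$ in $Y$ if and only if $\inf_{x\in S}\yorrho{d}{F}(x)=0$. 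This second fact is the tool that decides, for a set clopen in $X\setminus F$, whether adjoining $\yorppp{F}$ (or leaving it out) produces a set clopen in $Y$. Note also that $X\setminus F\neq\emptyset$, since $\yotdim{X\setminus F}=0$ presupposes non-emptiness, so $Y\neq\{\yorppp{F}\}$ is not the only interesting case.

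Given disjoint closed $A,B$, I would first reduce to the situation $\yorppp{F}\notin B$: the cases $A=\emptyset$ or $B=\emptyset$ are trivial, and if $\yorppp{F}\in B$ I swap $A$ and $B$ and pass to the complement of the clopen set obtained. Once $\yorppp{F}\notin B$, I set $\delta:=\yorrho{\yorind{d}}{B}(\yorppp{F})$, which is positive because $B$ is closed and omits $\yorppp{F}$; equivalently $\yorrho{d}{F}\ge\delta$ on $B\yosub X\setminus F$. This positive gap, keeping the whole of $B$ bounded away from $F$, is exactly what allows the delicate control of closures at $\yorppp{F}$.

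The heart of the argument is a short case split on whether $\yorppp{F}\in A$. If $\yorppp{F}\in A$, I force the whole metric ``tube'' around $F$ onto the $A$-side: the set $P:=(A\setminus\{\yorppp{F}\})\cup\{x\in X\setminus F:\yorrho{d}{F}(x)\le\delta/3\}$ is closed in $X\setminus F$ and disjoint from $B$, so ultranormality of $X\setminus F$ gives $C$ clopen in $X\setminus F$ with $P\yosub C$ and $C\cap B=\emptyset$. Then $C\cup\{\yorppp{F}\}$ contains the ball $U(\yorppp{F},\delta/3;\yorind{d})$, so it is open at $\yorppp{F}$, while its complement $(X\setminus F)\setminus C$ lies in $\{x:\yorrho{d}{F}(x)>\delta/3\}$, so it does not accumulate at $\yorppp{F}$; hence $C\cup\{\yorppp{F}\}$ is clopen in $Y$ and separates $A$ from $B$. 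If instead $\yorppp{F}\notin A$, then symmetrically $\yorrho{d}{F}\ge\delta'$ on $A$ for $\delta':=\yorrho{\yorind{d}}{A}(\yorppp{F})>0$, and I separate $A$ from the enlarged closed set $B\cup\{x:\yorrho{d}{F}(x)\le\delta'/2\}$ by a set $C$ clopen in $X\setminus F$; this $C$ avoids a tube around $F$, so $\yorppp{F}\notin\cl(C)$ and $C$ is already clopen in $Y$.

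The main obstacle is precisely the failure of $\yorqmap$ to be open or closed, illustrated by the Remark following Proposition~\ref{prop:inducedmetric}: a clopen subset of $X$ containing $F$, such as one furnished by Lemma~\ref{lem:clopen}, need not project to a neighbourhood of $\yorppp{F}$, because its complement may accumulate on $F$. My way around this is to never separate inside $X$ itself but inside the zero-dimensional subspace $X\setminus F$, and to push an entire tube $\{x:\yorrho{d}{F}(x)\le c\}$ onto the side containing $\yorppp{F}$ whenever $\yorppp{F}$ is separated at all, using the positive gap $\delta$ (or $\delta'$). This secures the clopenness at $\yorppp{F}$ that the naive projection lacks, and it is the only step where the hypothesis $\yotdim{X\setminus F}=0$ and the explicit form of $\yorind{d}$ must be combined carefully.
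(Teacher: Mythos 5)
Your proof is correct, but it reaches ultranormality by a genuinely different route than the paper. The paper argues in two stages: it first shows that $\yorppp{F}$ admits a neighborhood base of clopen subsets of $\yorqsp{X}{F}$, obtaining these by applying Lemma~\ref{lem:clopen} \emph{inside $X$} (to the closed tube $\yorsetballb(F, \epsilon/2; d)$ within the open tube $\yorsetballu(F, \epsilon; d)$) and then pushing the resulting clopen set $H\yosub X$ forward through $\yorqmap$; the delicate point there is verifying that $P=\yorqmap(H)$ is clopen even though $\yorqmap$ is neither open nor closed, which the paper does via the identities $\yorqmap(X\setminus H)=\yorqsp{X}{F}\setminus P$ and $P=U(\yorppp{F}, \epsilon/2)\cup \bigl(P\setminus B(\yorppp{F}, \epsilon/4)\bigr)$. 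Only in a second stage does it separate general disjoint closed sets, as $O=Q\cup V$ with $V$ a clopen neighborhood from stage one and $Q$ a separator in the zero-dimensional complement. You skip stage one entirely: you never invoke Lemma~\ref{lem:clopen} (hence neither the normality of $X$ nor any image-clopenness verification), performing every separation inside $X\setminus F\cong \yorqsp{X}{F}\setminus\{\yorppp{F}\}$, where zero-dimensionality is exactly the hypothesis, and you substitute for the clopen neighborhood the trick of adjoining the closed tube $\{\, x\in X\setminus F \mid \yorrho{d}{F}(x)\le \delta/3\,\}$ (or excluding the tube at level $\delta'/2$, in your second case) before separating; clopenness at $\yorppp{F}$ is then read off directly from $\yorind{d}(\yorppp{F}, x)=\yorrho{d}{F}(x)$ together with the positive gap $\delta$. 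The two arguments are equivalent in strength --- your first case applied to $A=\{\yorppp{F}\}$ and $B=\yorqsp{X}{F}\setminus U(\yorppp{F}, \epsilon)$ reproduces the paper's clopen neighborhood base --- but the paper's version isolates that base as a reusable intermediate fact, while yours is leaner, needing only ultranormality of $X\setminus F$ and the explicit formula for $\yorind{d}$, and it localizes all the work at the single point where $\yorqmap$ misbehaves. The small steps you rely on all check out: $\delta, \delta'>0$ because the relevant sets are closed, nonempty (after your trivial-case reduction) and omit $\yorppp{F}$; the tubes are closed in $X\setminus F$ by continuity of $\yorrho{d}{F}$; $A\setminus\{\yorppp{F}\}$ and $B$ are closed in the punctured subspace; and $C\cup\{\yorppp{F}\}$ (respectively $C$) is clopen in $\yorqsp{X}{F}$ for the reasons you give, since $X\setminus F$ is open in $\yorqsp{X}{F}$ and the tube inclusion (respectively exclusion) controls the point $\yorppp{F}$.
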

\begin{proof}
Notice that for all $\eta\in (0, \infty)$, we have 
$\yorqmap(\yorsetballb(F, \eta ; d))=B(\yorppp{F}, \eta; \yorind{d})$ and $\yorqmap(\yorsetballu(F, \eta ; d))=U(\yorppp{F}, \eta; \yorind{d})$. 
First we show that the point $\yorppp{F}$
has a neighborhood system consisting of 
clopen subsets of $\yorqsp{X}{F}$. 
Take an arbitrary number 
$\epsilon\in (0, \infty)$. 
Put $G=U(\yorppp{F}, \epsilon; \yorind{d})$.
In this case, we have 
$\yorqmap^{-1}(G)
=\yorsetballu(F, \epsilon; d)$. 
Applying Lemma \ref{lem:clopen}
to $\yorsetballb(F, \epsilon/2; d)$ and 
$\yorqmap^{-1}(G)(=\yorsetballu(F; \epsilon; d))$, 
we can find a 
clopen subset $H$ of $X$ such that 
$\yorsetballb(F, \epsilon/2; d)\yosub H$ and 
$H\yosub \yorqmap^{-1}(G)$. 
Put $P=\yorqmap(H)$. 
Then we obtain 
$B(\yorppp{F}, \epsilon/2; \yorind{d})\yosub P$ and 
$P\yosub G$. 
We now prove $P$ is a clopen neighborhood of 
$\yorppp{F}$ in $\yorqsp{X}{F}$ with $P\yosub G$. 
Due to  $B(\yorppp{F}, \epsilon/2; \yorind{d})\yosub P$, 
the set $P$ is a 
neighborhood of $\yorppp{F}$. 
Since $\yorqmap|_{X\setminus F}\colon 
X\setminus F\to \yorqsp{X}{F}\setminus \{\yorppp{F}\}$  is a homeomorphism
(see \ref{item:theta3} in Proposition \ref{prop:inducedmetric})
 and 
$\yorqsp{X}{F}\setminus \{\yorppp{F}\}$ is 
open in $\yorqsp{X}{F}$, 
we notice that 
$\yorqmap(X\setminus H)$ 
and  $\yorqmap(H\setminus \yorsetballb(F, \epsilon/4; d))$
are open in 
$\yorqsp{X}{F}$.
Remark that we obtain  the equalities 
\begin{align}
&\yorqmap(X\setminus H)=\yorqsp{X}{F}\setminus P, \label{al:2222}\\
&\yorqmap(H\setminus \yorsetballb(F, \epsilon/4; d))=P\setminus B(\yorppp{F}, \epsilon/4; \yorind{d}).\label{al:2121}
\end{align}
Then 
the sets 
$\yorqsp{X}{F}\setminus P$
and $P\setminus B(\yorppp{F}, \epsilon/4; \yorind{d})$ 
are open in $\yorqsp{X}{F}$. 
In particular, 
the set $P$ is closed in $\yorqsp{X}{F}$. 
We also obtain 
$P=U(\yorppp{F}, \epsilon/2; \yorind{d})\cup 
(P\setminus B(\yorppp{F}, \epsilon/4; \yorind{d}))$, 
and hence 
 $P$  is open in $\yorqsp{X}{F}$. 
Therefore 
the set  $P$  is clopen in $\yorqsp{X}{F}$ such that 
$\yorppp{F}\in P$ and $P\yosub G$. 
This implies  that $\yorppp{F}$ has a clopen neighborhood system. 

To prove $\yotdim{\yorqsp{X}{F}}=0$, 
we verify that $\yorqsp{X}{F}$ is ultranormal. 
Let $A$ and $B$ be disjoint  closed subsets of $\yorqsp{X}{F}$. 
We may assume that $\yorppp{F}\not \in B$ and 
$B\neq \emptyset$. 
Due to the argument  discussed above, 
 there exists a clopen neighborhood $V$ of 
$\yorppp{F}$ with $V\cap B=\emptyset$. 
Since $\yorqsp{X}{F}\setminus V=(X\setminus F)
\setminus (V\setminus \{\yorppp{F}\})$ as a set, 
the statement \ref{item:theta3} in 
Proposition  \ref{prop:inducedmetric} 
yields  
$\yotdim{\yorqsp{X}{F}\setminus V}=0$. 
Thus 
we can take  a clopen subset $Q$ of $X\setminus V$
such that $A\setminus V\yosub Q$ and $Q\cap B=\emptyset$ 
(it can happen that $A\setminus V=A$). 
Put $O=Q\cup V$. Then $O$ is clopen in 
$\yorqsp{X}{F}$ and $A\yosub O$ and $O\cap B=\emptyset$. 
This means that 
$\yorqsp{X}{F}$ is ultranormal, and hence
$\yotdim{\yorqsp{X}{F}}=0$. 
\end{proof}

For a set $S$, let  $\card(S)$ denote the 
cardinality of $S$. 
\begin{prop}\label{prop:inducedcomplete}
Let $X$ be a metrizable space, 
 and 
$F$ be a non-empty
 closed subset of $X$. 
If $d\in \met(X)$ is complete, 
then so is $\yorind{d}$. 
\end{prop}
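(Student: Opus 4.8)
The plan is to lift a Cauchy sequence in $(\yorqsp{X}{F}, \yorind{d})$ to $(X, d)$ and invoke the completeness of $d$, the only delicate point being a dichotomy according to whether the sequence accumulates at the distinguished point $\yorppp{F}$.

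First I would record the one computation that drives everything. Since $\yorind{d}(x, \yorppp{F}) = \yorrho{d}{F}(x)$ for every $x \in \yorqsp{X}{F}$, the function $x \mapsto \yorrho{d}{F}(x)$ is nothing but the $\yorind{d}$-distance to the fixed point $\yorppp{F}$, and hence it is $1$-Lipschitz on $(\yorqsp{X}{F}, \yorind{d})$. Consequently, given a Cauchy sequence $(x_n)_n$ in $(\yorqsp{X}{F}, \yorind{d})$, the real sequence $(\yorrho{d}{F}(x_n))_n$ is Cauchy in $[0,\infty)$ and therefore converges to some $r \ge 0$. If $r = 0$, then $\yorind{d}(x_n, \yorppp{F}) = \yorrho{d}{F}(x_n) \to 0$, so $(x_n)_n$ converges to $\yorppp{F}$ and we are done.

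The substantive case is $r > 0$. Here I would first argue that the sequence is eventually bounded away from $\yorppp{F}$: choosing $N$ so large that $\yorrho{d}{F}(x_n) > r/2$ and $\yorind{d}(x_n, x_m) < r$ for all $n, m \ge N$, the defining formula $\yorind{d}(x_n, x_m) = \min\{d(x_n, x_m), \yorrho{d}{F}(x_n) + \yorrho{d}{F}(x_m)\}$ forces $\yorind{d}(x_n, x_m) = d(x_n, x_m)$, because the second entry exceeds $r > \yorind{d}(x_n, x_m)$. Thus $(x_n)_{n \ge N}$, viewed inside $X \setminus F$, is Cauchy for $d$, and by completeness of $d$ it converges in $(X, d)$ to some point $x_*$. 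It remains to check that $x_*$ represents a legitimate point of the quotient and that convergence survives the passage to $\yorind{d}$. Because $\yorrho{d}{F}$ is $1$-Lipschitz for $d$, one has $\yorrho{d}{F}(x_*) = \lim_n \yorrho{d}{F}(x_n) = r > 0$, so $x_* \notin F$ (here I use that $F$ is closed), i.e. $x_* \in X \setminus F = \yorqsp{X}{F} \setminus \{\yorppp{F}\}$. Finally, since $\yorqmap$ is $1$-Lipschitz (statement \ref{item:theta2} in Proposition \ref{prop:inducedmetric}) and restricts to the identity on $X \setminus F$, we get $\yorind{d}(x_n, x_*) \le d(x_n, x_*) \to 0$, so $(x_n)_n$ converges to $x_*$ in $(\yorqsp{X}{F}, \yorind{d})$.

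The main obstacle is precisely the case $r > 0$: one must simultaneously verify that the quotient metric reduces to $d$ along the tail of the sequence and that the resulting $d$-limit cannot escape into $F$, which would make it fail to represent a point of $X \setminus F$. Both facts hinge on the fixed positive value $r$ of the limiting distance to $F$, together with the completeness of $d$ and the closedness of $F$; the case $r = 0$ is comparatively immediate.
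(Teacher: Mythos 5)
Your proof is correct and follows essentially the same route as the paper's: both split according to whether the Cauchy sequence accumulates at $\yorppp{F}$ or is eventually bounded away from $F$, in which case $\yorind{d}$ agrees with $d$ along the tail and the completeness of $d$ applies. Your version is a slight streamlining --- observing that $\yorrho{d}{F}(x_n)$ converges to some $r \ge 0$ (being $1$-Lipschitz for $\yorind{d}$) gives the clean dichotomy $r=0$ versus $r>0$ and convergence of the full sequence directly, whereas the paper argues combinatorially via the sets $W_n$ and extracts a convergent subsequence.
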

\begin{proof}
Take a Cauchy sequence $\{p_{i}\}_{i\in \zz_{\ge 0}}$ of 
$(\yorqsp{X}{F}, \yorind{d})$. 
We only need to show that there exists a convergent  subsequence of 
$\{p_{i}\}_{i\in \zz_{\ge 0}}$. 
Put $p_{i}=\yorqmap(x_{i})$ for each $i\in \zz_{\ge 0}$. 
For each $n\in \zz_{\ge 0}$, 
we define 
$W_{n}=\{\, i\in \zz_{\ge 0}\mid \yorrho{d}{F}(x_{i})\le 2^{-n}\, \}$. 
We divide the proof into two parts. 

Case 1.~(For all $n\in \zz_{\ge 0}$, we have 
$\card(W_{n})=\aleph_{0}$): 
In this case,  
we can take a strictly increasing function 
$\phi\colon \zz_{\ge 0}\to \zz_{\ge 0}$ such that 
$\phi(n)\in W_{n}$ for all $n\in \zz_{\ge 0}$. 
Then $\yorrho{d}{F}(x_{\phi(n)})\le 2^{-n}$ for all $n\in \zz_{\ge 0}$. Thus we also obtain 
$\lim_{i\to \infty}\yorind{d}(p_{\phi(i)}, \yorppp{F})=\lim_{i\to \infty}\yorrho{d}{F}(x_{\phi(i)})\to 0$, 
and hence 
$p_{\phi(i)}\to \yorppp{F}$ as $i\to \infty$. 
Therefore $\{p_{i}\}_{i\in \zz_{\ge 0}}$ has a convergent subsequence $\{p_{\phi(i)}\}_{i\in \zz_{\ge 0}}$. 

Case 2.~(There exists $k\in \zz_{\ge 0}$ such that 
$\card(W_{k})<\aleph_{0}$): 
In this case, we have 
$\card(\zz_{\ge 0}\setminus W_{k})=\aleph_{0}$. 
Thus, we can take a strictly increasing function 
$\phi\colon \zz_{\ge 0}\to \zz_{\ge 0}$ such that 
$\phi(i)\in \zz_{\ge 0}\setminus W_{k}$ for all 
$i\in \zz_{\ge 0}$. 
Then we have $2^{-k}<\yorrho{d}{F}(x_{\phi(i)})$ for all 
$i\in \zz_{\ge 0}$. 
Since $\{p_{\phi(i)}\}_{i\in \zz_{\ge 0}}$ is Cauchy, 
we notice that 
$\yorind{d}(p_{\phi(n)},p_{\phi(m)})\to 0$ as 
$n, m\to \infty$. 
From $2^{-k}<\yorrho{d}{F}(x_{\phi(i)})$ for all 
$i\in \zz_{\ge 0}$ and from
the definition of $\yorind{d}$, it follows that 
for all sufficiently large numbers $n, m\in \zz_{\ge 0}$, 
 we have 
$\yorind{d}(p_{\phi(n)}, p_{\phi(m)})=d(x_{\phi(n)}, x_{\phi(m)})$ and 
$d(x_{\phi(n)}, x_{\phi(m)})\to 0$ as $m, n\to \infty$. 
Namely,  the sequence $\{x_{\phi(i)}\}_{i\in \zz_{\ge 0}}$ is 
Cauchy in $(X, d)$. 
Since $d$ is complete, there exists a 
limit of $\{x_{\phi(i)}\}_{i\in \zz_{\ge 0}}$, 
say $l\in X$. 
Then $\{p_{\phi(i)}\}_{i\in \zz_{\ge 0}}$ converges to 
$\yorqmap(l)\in \yorqsp{X}{F}$. 
This finishes the proof. 
\end{proof}

For a non-compact,  locally compact,  
Hausdorff space 
$X$, we denote by $\yorop{X}$ the 
one-point compactification of $X$. 
Note that $\yorop{X}=X\sqcup\{\infty\}$ as a 
set. 
Remark that for  
a non-compact closed subset $F$ of $X$, 
 the space $\yorop{F}$ can be 
regarded as a closed subset of $\yorop{X}$ in a canonical way.

\begin{prop}\label{prop:qlocpt}
Let $X$ be a non-compact,  $\sigma$-compact, 
locally compact, 
 metrizable space, 
and $F$ be a non-compact closed subset of 
$X$. 
If $d\in \met(X)$ and $D\in \met(\yorop{X})$ 
satisfy that
$D|_{X^{2}}=d$, 
then 
$(\yorqsp{\yorop{X}}{\yorop{F}}, \yorind{D})$
is 
isometric 
 to 
$(\yorqsp{X}{F}, \yorind{d})$. 
In particular, the space 
$(\yorqsp{X}{F}, \yorind{d})$
 is compact. 
\end{prop}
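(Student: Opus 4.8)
The plan is to realize both quotient spaces on the single common underlying set $(X\setminus F)\sqcup\{*\}$ and to check that the induced bijection is an isometry. Since $\yorop{X}=X\sqcup\{\infty\}$ and $\yorop{F}=F\sqcup\{\infty\}$, we have $\yorop{X}\setminus\yorop{F}=X\setminus F$, so that $\yorqsp{\yorop{X}}{\yorop{F}}=(X\setminus F)\sqcup\{\yorppp{\yorop{F}}\}$ and $\yorqsp{X}{F}=(X\setminus F)\sqcup\{\yorppp{F}\}$ coincide as sets once we send $\yorppp{\yorop{F}}$ to $\yorppp{F}$ and fix every point of $X\setminus F$. Call this bijection $\iota$. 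By Proposition \ref{prop:inducedmetric}, both $\yorind{D}$ and $\yorind{d}$ are genuine metrics (here we use that $\yorop{F}$ is a non-empty closed subset of $\yorop{X}$ and $D\in\met(\yorop{X})$), so it remains only to verify $\yorind{D}(p,q)=\yorind{d}(\iota(p),\iota(q))$ for all $p,q$.

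First I would reduce everything to comparing the two distance-to-basepoint functions. For $x,y\in X\setminus F$ the hypothesis $D|_{X^{2}}=d$ gives $D(x,y)=d(x,y)$, while the decomposition $\yorop{F}=F\sqcup\{\infty\}$ together with $D|_{X^{2}}=d$ yields
\[
\yorrho{D}{\yorop{F}}(x)=\min\{\yorrho{d}{F}(x),\,D(x,\infty)\},
\]
since the infimum of $D(x,\cdot)$ over $F$ equals $\yorrho{d}{F}(x)$. Hence, once I show $\yorrho{D}{\yorop{F}}(x)=\yorrho{d}{F}(x)$ for every $x\in X\setminus F$, the definition of $\yorind{\cdot}$ immediately gives $\yorind{D}(x,y)=\yorind{d}(x,y)$ for $x,y\in X\setminus F$; moreover $\yorind{D}(x,\yorppp{\yorop{F}})=\yorrho{D}{\yorop{F}}(x)=\yorrho{d}{F}(x)=\yorind{d}(x,\yorppp{F})$, which handles the distances to the basepoint, so that $\iota$ is an isometry.

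The heart of the argument, and the step I expect to be the main obstacle, is the inequality $D(x,\infty)\ge\yorrho{d}{F}(x)$ (the reverse inequality $\yorrho{D}{\yorop{F}}(x)\le\yorrho{d}{F}(x)$ being automatic from the displayed minimum). Here I would use that $F$ is non-compact and closed in $X$, so that $\infty$ lies in the closure of $F$ taken inside the metrizable, hence first countable, space $(\yorop{X},D)$; thus I may choose a sequence $w_{n}\in F$ with $w_{n}\to\infty$. Continuity of $D(x,\cdot)$ gives $D(x,\infty)=\lim_{n}D(x,w_{n})=\lim_{n}d(x,w_{n})$, and each term satisfies $d(x,w_{n})\ge\yorrho{d}{F}(x)$ since $w_{n}\in F$; passing to the limit yields the claim. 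This is exactly where the non-compactness of $F$, guaranteeing that $\infty$ is an accumulation point of $F$ in $\yorop{X}$, is essential.

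Finally, for the ``in particular'' assertion I would invoke compactness of the ambient space: $(\yorop{X},D)$ is a metric realization of the one-point compactification of the non-compact locally compact Hausdorff space $X$, hence compact. Proposition \ref{prop:sepcpt}\ref{item:26:cpt} then shows that $(\yorqsp{\yorop{X}}{\yorop{F}},\yorind{D})$ is compact, and being isometric to it via $\iota$, the space $(\yorqsp{X}{F},\yorind{d})$ is compact as well.
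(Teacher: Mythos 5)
Your proposal is correct and follows essentially the same route as the paper's proof: identify $\yorqsp{\yorop{X}}{\yorop{F}}$ and $\yorqsp{X}{F}$ as the same underlying set via $\yorppp{\yorop{F}}=\yorppp{F}$, verify that $\yorind{D}$ and $\yorind{d}$ coincide, and then apply Proposition \ref{prop:sepcpt}\ref{item:26:cpt} to the compact space $\yorop{X}$. The only difference is that you make explicit the step the paper leaves implicit, namely $\yorrho{D}{\yorop{F}}(x)=\yorrho{d}{F}(x)$ via the inequality $D(x,\infty)\ge\yorrho{d}{F}(x)$, which you correctly deduce from the non-compactness of $F$ forcing $\infty\in\cl(F)$ in $\yorop{X}$ --- a worthwhile clarification, and exactly where the hypothesis on $F$ enters.
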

\begin{proof}
We can choose $\yorppp{\yorop{F}}\in \yorqsp{\yorop{X}}{\yorop{F}}$ so that 
$\yorppp{\yorop{F}}=\yorppp{F}$. 
Then we obtain 
$\yorqsp{\yorop{X}}{\yorop{F}}=\yorqsp{X}{F}$ as a set. 
Since $X\setminus F=\yorop{X}\setminus \yorop{F}$, 
we conclude that 
$\yorind{D}(x, y)=\yorind{d}(x, y)$ for all 
$x, y\in \yorqsp{X}{F}$. 
Thus the spaces  $(\yorqsp{X}{F}, \yorind{d})$ and 
$(\yorqsp{\yorop{X}}{\yorop{F}}, \yorind{D})$ are isometric. 
Therefore the statement \ref{item:26:cpt} in Proposition \ref{prop:sepcpt}
completes the proof. 
\end{proof}

\begin{rmk}
There exist a metrizable space $X$, a non-empty subset $F$ of 
$X$, and  metrics 
$d, e\in \met(X)$ such that 
$(\yorqsp{X}{F}, \yorind{d})$ and 
$(\yorqsp{X}{F},  \yorind{e})$ are not
homeomorphic to each other. 
We give examples. 
Put $X=\rr$ and $F=\zz_{\ge 0}$.
Let $d\in \met(X)$ be the ordinary Euclidean metric and 
 $e\in \met(X)$ be a metric such that there exists 
$E\in \met(\yorop{\rr})$ with $E|_{X^{2}}=e$. 
Then the space $(\yorqsp{X}{F}, \yorind{d})$ is 
non-compact and $(\yorqsp{X}{F}, \yorind{e})$ is 
compact (see Proposition \ref{prop:qlocpt}). Thus, they are not homeomorphic to 
each other. 
On the other hand, if $F$ is compact, 
all metric quotient spaces associated with $F$ are homeomorphic to each 
other. 
\end{rmk}

\subsection{Retractions}
For a topological space $X$, 
a  subset $F$ of $X$ is said to be 
a \emph{retract} if there exists a 
continuous map $r\colon X\to F$ such that 
$r(a)=a$ for all $a\in F$. 
In this case, the map $r$ is called  a 
\emph{retraction}. 

Let $(X, d)$ be a metric space. 
For a subset $A$ of $X$, and for 
$\epsilon\in (0, \infty)$, 
we define
$\yorex(A, \epsilon)=
X\setminus \yorsetballu(A, \epsilon)$. 
Notice that $\yorex(A, \epsilon)=\{\, x\in X\mid \epsilon\le \yorrho{d}{A}(x)\, \}$ and 
$\yorex(A, \epsilon)$ is closed in $X$. 
For a subset $A$ of $X$, we denote by $\yodiam(A)$
the diameter of $A$. 
For $h\in (0, \infty)$, we say that
a subset $E$ of $X$ is \emph{$h$-separated} if 
all distinct $x, y\in E$ satisfy 
$h\le d(x, y)$. 

The next theorem is essentially 
proven by
Engelking \cite[Lemma]{MR239571}.  
Since the conditions \ref{item:r11}
and \ref{item:r22} in  the next theorem do not directly appear in 
the statement of  \cite[Lemma]{MR239571}, 
for the sake of self-containedness, 
we give a proof. 
The condition \ref{item:r22} 
plays an important role of the proof of 
Theorem \ref{thm:main1}.
\begin{thm}\label{thm:engelking}
Let $X$ be a metrizable space, 
and $F$ be a 
non-empty closed subset of $X$ with 
$\yotdim{X\setminus F}= 0$. 
Fix $d\in \met(X)$. 
Then there exists a retraction 
$r\colon X\to F$ 
satisfying the following conditions:
\begin{enumerate}[label=\textup{(D)}]
\item\label{item:r11}
For all $\epsilon\in (0, \infty)$, 
the image set  $r\left(\yorex(F, \epsilon)\right)$ is 
closed and 
discrete  in  $F$. 
\end{enumerate}
\begin{enumerate}[label=\textup{(SR)}]
\item\label{item:r22} If 
a sequence $\{x_{i}\}_{i\in \zz_{\ge 0}}$ in $X$ satisfies 
$\lim_{i\to \infty}\yorrho{d}{F}(x_{i})=0$, 
then we have $\lim_{i\to \infty}d(x_{i}, r(x_{i}))=0$. 
\end{enumerate}
\end{thm}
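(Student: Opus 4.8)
The plan is to construct the retraction $r$ by hand: decompose $X\setminus F$ into clopen ``dyadic shells'' on which $\yorrho{d}{F}$ is pinched to a single scale, cut each shell into small clopen pieces, and collapse each piece to a point of $F$ taken from a \emph{nested} family of separated nets whose coarseness matches the scale of the shell. Concretely, for each $n\in\zz$ I would apply Lemma~\ref{lem:clopen} to the closed set $A_{n}=\yorsetballb(F, 2^{-n-1}; d)$ and the open set $\yorsetballu(F, 2^{-n}; d)\supseteq A_{n}$ (this is legitimate since $X\setminus A_{n}\yosub X\setminus F$ and $\yotdim{}=0$ passes to subspaces), obtaining a clopen set $V_{n}$ with $A_{n}\yosub V_{n}\yosub \yorsetballu(F, 2^{-n}; d)$. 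The chain $V_{n+1}\yosub \yorsetballu(F,2^{-n-1};d)\yosub A_{n}\yosub V_{n}$ shows the family is automatically nested; moreover $\bigcup_{n}V_{n}\supseteq\bigcup_{n}A_{n}=X$ and $\bigcap_{n}V_{n}\yosub\bigcap_{n}\yorsetballu(F,2^{-n};d)=F$. Hence the shells $E_{n}=V_{n}\setminus V_{n+1}$ are clopen, pairwise disjoint, partition $X\setminus F$, and satisfy $2^{-n-2}<\yorrho{d}{F}(x)<2^{-n}$ for $x\in E_{n}$.

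Next I would fix, for each $n$, a maximal $2^{-n-2}$-separated subset $N_{n}$ of $F$, chosen inductively so that $N_{n-1}\yosub N_{n}$ (extend the coarser net); maximality forces $N_{n}$ to be $2^{-n-2}$-dense in $F$. Since each $E_{n}\yosub X\setminus F$ is zero-dimensional and metrizable, its open cover by balls of radius $2^{-n-3}$ has a refinement into a family of pairwise disjoint clopen (in $E_{n}$, hence in $X$) pieces. For each such piece $W$ (with $W\yosub E_{n}$) I pick $x_{W}\in W$, a nearest point $p_{W}\in F$ with $d(x_{W},p_{W})=\yorrho{d}{F}(x_{W})$, and a point $a_{W}\in N_{n}$ with $d(p_{W},a_{W})\le 2^{-n-2}$, and set $r\equiv a_{W}$ on $W$ together with $r=\mathrm{id}$ on $F$. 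Because $\yodiam(W)<2^{-n-3}$ and $\yorrho{d}{F}(x)>2^{-n-2}=2^{-n}/4$ on $E_{n}$, a routine estimate gives an absolute constant $C$ with $d(x,r(x))\le C\,\yorrho{d}{F}(x)$ for all $x\in X\setminus F$.

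On $X\setminus F$ the map $r$ is locally constant (each point lies in a clopen piece on which $r$ is constant), hence continuous there; at a point $a\in F$ continuity follows from $d(x,r(x))\le C\,\yorrho{d}{F}(x)\le C\,d(x,a)$, and since $r$ fixes $F$ it is a retraction. Condition \ref{item:r22} is then immediate: if $\lim_{i}\yorrho{d}{F}(x_{i})=0$ then $d(x_{i},r(x_{i}))\le C\,\yorrho{d}{F}(x_{i})\to 0$. For condition \ref{item:r11}, fix $\epsilon\in(0,\infty)$ and let $n_{1}$ be the largest integer with $2^{-n_{1}}>\epsilon$. If a piece $W\yosub E_{n}$ meets $\yorex(F,\epsilon)$, some $y\in W$ has $\epsilon\le\yorrho{d}{F}(y)<2^{-n}$, so $n\le n_{1}$; hence every image point of $\yorex(F,\epsilon)$ lies in $\bigcup_{n\le n_{1}}N_{n}=N_{n_{1}}$, using the nesting. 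As $N_{n_{1}}$ is $2^{-n_{1}-2}$-separated and $2^{-n_{1}-2}>\epsilon/4$, the set $r(\yorex(F,\epsilon))$ is $(\epsilon/4)$-separated in $F$, and therefore closed and discrete.

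The main obstacle, and the entire point of the construction, is the choice of targets from \emph{nested separated nets matched to the scale} $2^{-n}$. A naive retraction sending each piece to a genuine nearest point of $F$ does satisfy the estimate $d(x,r(x))\le C\,\yorrho{d}{F}(x)$ and is continuous, yet it can fail \ref{item:r11}: many pieces lying far from $F$ may share nearly the same nearest point, so their images accumulate in $F$ and $r(\yorex(F,\epsilon))$ is neither discrete nor closed (this also destroys the closedness of $r$). Collapsing far pieces instead onto a \emph{coarse} net is exactly what forces the images of $\yorex(F,\epsilon)$ into a single $(\epsilon/4)$-separated set, and getting this separation out of the zero-dimensionality of $X\setminus F$ via Lemma~\ref{lem:clopen} is the crux; the remaining verifications (continuity through clopen local constancy, and \ref{item:r22} through the Lipschitz-type bound) are routine.
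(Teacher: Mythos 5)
Your construction is, in outline, the paper's own: clopen shells around $F$ obtained from Lemma~\ref{lem:clopen}, a disjoint clopen refinement of each shell at the matching scale, and collapse of each piece onto a nested family of maximal separated nets. Your uniform bound $d(x,r(x))\le C\,\yorrho{d}{F}(x)$ is in fact a clean strengthening: the paper derives \ref{item:r22} by a contradiction argument (showing the shell index $M(n)\to\infty$ along the sequence) and then gets continuity at points of $F$ from \ref{item:r22}, whereas you get both at once. Your closing diagnosis of why a nearest-point collapse fails \ref{item:r11} is also exactly the point of the net device.

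There is, however, one genuine gap: the $\zz$-indexed nets. The instruction ``chosen inductively so that $N_{n-1}\yosub N_{n}$ (extend the coarser net)'' cannot be executed, because $\zz$ has no least element --- there is no coarsest net from which the recursion can start. The obvious Zorn patch (take a tower $(N_{n})_{n\in\zz}$ of nested separated sets, maximal under componentwise inclusion) does not repair it: a maximal tower can have non-maximal levels, since a point $y$ addable at level $n$ must, to preserve nesting, be added to \emph{every} level $m\ge n$, and it may be blocked at some finer level; then the $2^{-n-2}$-density of $N_{n}$, which you need to choose $a_{W}$, is lost. Dropping nesting instead kills \ref{item:r11}: $\bigcup_{n\le n_{1}}N_{n}$ is a union of infinitely many unrelated nets, and each very coarse net can contribute a point to a fixed ball of $F$, so the union need not be discrete or closed. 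The repair is precisely the paper's device of capping the scales: set $V_{-1}=X$, treat $U_{0}=X\setminus V_{0}$ (where $\yorrho{d}{F}\ge 2^{-1}$) as a single shell cut into pieces of diameter $\le 1$ with targets in $N_{0}$, and run your induction on $n\in\zz_{\ge 0}$ starting from $N_{0}$. Your Lipschitz-type bound survives on the capped shell because $\yorrho{d}{F}$ is bounded below there, and \ref{item:r11} still holds for large $\epsilon$ because closedness and discreteness require only \emph{some} positive separation of the target net, not separation comparable to $\epsilon$. Two smaller blemishes: a nearest point $p_{W}\in F$ with $d(x_{W},p_{W})=\yorrho{d}{F}(x_{W})$ need not exist in a general metric space --- take an almost-nearest point, as the paper does via $d(o_{s},a_{s})<R_{s}+2^{-i}$; and the disjoint clopen refinement of the \emph{infinite} cover of $E_{n}$ by small balls does not follow directly from the finite-cover definition of $\yotdim{X\setminus F}=0$ used in the paper, which is why the paper invokes \cite[Corollary 1.4]{ellis1970extending} at this step (also, pieces refining balls of radius $2^{-n-3}$ have diameter $\le 2^{-n-2}$, not $<2^{-n-3}$; this is harmlessly absorbed into $C$).
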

\begin{proof}
The idea of the construction in this proof roughly coincides 
with that of \cite[Lemma]{MR239571}, however, 
some details are different from it. 

Using Lemma \ref{lem:clopen} and induction, 
we can take a nested  sequence 
$\{V_{i}\}_{i\in \zz_{\ge 0}}$ of clopen subsets of 
$X$ satisfying  that 
\[
\yorsetballb(F, 2^{-i-1}; d)\yosub V_{i}
\]
and 
\[V_{i}\yosub \yorsetballu(F, 2^{-i}; d)\cap V_{i-1}
\]
 for all $i\in \zz_{\ge 0}$, where we put $V_{-1}=X$
(in the $i$-th step, apply Lemma \ref{lem:clopen} to 
$A=\yorsetballb(F, 2^{-i-1}; d)$ 
and 
$U=\yorsetballu(F, 2^{-i}; d)\cap V_{i-1}$). 

Note that for all $i\in \zz_{\ge 0}$, we obtain 
the next inclusions. 
\begin{enumerate}[label=\textup{(\alph*)}]
\item\label{item:reta}
$\yorex(F, 2^{-i})\yosub X\setminus V_{i}$, 
\item\label{item:retb}
$X\setminus V_{i}\yosub \yorex(F, 2^{-i-1})$.
\end{enumerate}
Put $U_{0}=X\setminus V_{0}$ and 
$U_{i}=V_{i}\setminus V_{i+1}$ for all $i\in \zz_{\ge 1}$. 
According to \cite[Corollary 1.4]{ellis1970extending} and  $\yotdim{X\setminus F}=0$, 
for each $i\in \zz_{\ge 0}$, we can find 
 a mutually disjoint  family $\{O_{s}\}_{s\in S(i)}$ of 
 clopen subsets of $X$
such that $\coprod_{s\in S(i)} O_{s}=U_{i}$ and 
$\yodiam O_{s}\le 2^{-i}$. 
We may assume  that  $S(i)\cap S(j)= \emptyset$
if $i\neq j$. 
Put $S=\coprod_{i\in \zz_{\ge 0}}S(i)$. 
Notice that 
$\coprod_{s\in S}O_{s}=X$. 
For every $i\in \zz_{\ge 0}$, 
let $P_{i}$ be a maximal  $(2^{-i})$-separated 
subset of $F$ with respect to $d$ such that 
$P_{i}\yosub P_{i+1}$ for all $i\in \zz_{\ge 0}$.
Zorn's lemma guarantees the existence of 
$\{P_{i}\}_{i\in \zz_{\ge 0}}$. 

For
every $i\in \zz_{\ge 0}$ and 
for every   $s\in S(i)$, 
put  
$R_{s}=\inf\{\, d(o, a)\mid \text{$o\in O_{s}$ and  
$a\in F$}\, \}$. 
We can  take points 
 $o_{s}\in O_{s}$  and 
 $a_{s}\in F$, 
with 
$d(o_{s}, a_{s})<R_{s}+2^{-i}$. 
Since $P_{i}$ is a maximal $(2^{-i})$-separable subset of $F$, 
we can  find a point 
$p_{s}\in P_{i}$ with  
$d(a_{s}, p_{s})<2^{-i}$. 
Then for all $i\in \zz_{\ge 0}$ and  $s\in S(i)$, we have 
\begin{align}\label{item:poR2}
d(o_{s}, p_{s})<R_{s}+2^{-i+1}. 
\end{align}
We define a map 
$r\colon X\to F$ by 
\begin{align*}
r(x)=
\begin{cases}
x & \text{$x\in F$,}\\
p_{s} & \text{if $x\in O_{s}$ for some $s\in S$.}
\end{cases}
\end{align*}
By this definition,
the map $r$ satisfies $r(a)=a$ for all $a\in F$. 

We now show the condition \ref{item:r11}. 
For every $\epsilon\in (0, \infty)$, 
take $i\in \zz_{\ge 0}$ with 
$2^{-i}\le \epsilon$. 
Then,  due to the property \ref{item:reta},   we have 
\[
r\left(\yorex(F, \epsilon)\right)
\yosub 
r\left(\yorex(F, 2^{-i})\right)
\yosub 
r(X\setminus V_{i})
=\bigcup_{k=0}^{i-1}r(U_{k})\yosub P_{i}.
\] 
Since $P_{i}$ is closed and discrete in $F$, 
so is 
$r\left(\yorex(A, \epsilon)\right)$. 
This means that 
the condition \ref{item:r11} is satisfied. 

We next show  the condition \ref{item:r22}. 
Take a sequence $\{x_{n}\}_{n\in \zz_{\ge 0}}$
with $\lim_{n\to \infty}\yorrho{d}{F}(x_{n})=0$. 
If $x_{n}\in F$, then $d(x_{n}, r(x_{n}))=0$. 
Thus, 
 we only need to consider the case where 
each $x_{n}$ belongs to $X\setminus F$. 
For each $n\in \zz_{\ge 0}$, 
take $M(n)\in \zz_{\ge 0}$ and 
$s(n)\in S(M(n))$
satisfying that  $x_{n}\in O_{s(n)}$. 
Since $\lim_{n\to \infty}\yorrho{d}{F}(x_{n})=0$, 
the inequality 
$R_{s(n)}\le \yorrho{d}{F}(x_{n})$ implies 
 $R_{s(n)}\to 0$  as 
 $n\to \infty$. 
To prove $M(n)\to \infty$ as $n\to \infty$, 
 for the sake of contradiction, suppose that 
 $M(n)\not\to \infty$ as $n\to \infty$. 
 Then 
we can take an integer $k\in \zz_{\ge 0}$ and   a strictly increasing sequence 
 $\phi\colon \zz_{\ge 0}\to \zz_{\ge 0}$ such that 
$M(\phi(n))\le k$ for all $n\in \zz_{\ge 0}$. 
Notice that
 $x_{\phi(n)}\in X\setminus V_{k+1}$ 
 for all 
 $n\in \zz_{\ge 0}$. 
 The inclusion 
 $X\setminus V_{k+1}\yosub \yorex(F, 2^{-k-2})$ (see \ref{item:retb})
  implies that 
 $2^{-k-2}\le \yorrho{d}{F}(x_{\phi(n)})$ for all $n\in \zz_{\ge 0}$. 
 This contradicts the equality that $\lim_{i\to \infty}\yorrho{d}{F}(x_{i})=0$. 
Thus $M(n)\to \infty$
 as $n\to \infty$. 
From $\yodiam O_{s(n)}\le 2^{-M(n)}$, 
$x_{n}\in O_{s(n)}$,  and \eqref{item:poR2}, it follows that  
\begin{align*}
&d(x_{n}, r(x_{n}))
=d(x_{n}, p_{s(n)})
\le d(x_{n}, o_{s(n)})+d(o_{s(n)}, p_{s(n)})\\
&< \yodiam(O_{s(n)}) + R_{s(n)}+2^{-M(n)+1}
\le R_{s(n)}+2^{-M(n)+2}\to 0
\end{align*}
 as $n\to \infty$. 
This proves the condition \ref{item:r22}. 

It remains to show that $r$ is continuous. 
Take sequence $\{x_{i}\}_{i\in \zz_{\ge 0}}$ in $X$ 
and $a\in X$ such that 
$x_{i}\to a$. 
If $a\in X\setminus F$, then the point 
$a$ belongs to some 
$O_{s}$. Thus $x_{i}\in O_{s}$ for all sufficiently large 
 $i\in \zz_{\ge 0}$. 
Hence 
$r(x_{i})\to p_{s}(=r(a))$. 
If $a\in F$, then 
$\lim_{i\to \infty} \yorrho{d}{F}(x_{i})=0$. 
The condition \ref{item:r22} implies that  
$\lim_{i\to \infty}d(x_{i}, r(x_{i}))=0$. 
Namely, 
 $r(x_{i})\to a(=r(a))$. 
This finishes the proof. 
\end{proof}

\begin{rmk}
The name of the condition \ref{item:r22} in Theorem \ref{thm:engelking} stands for 
``Self-Reference.''
\end{rmk}

To show that 
Theorem \ref{thm:engelking} certainly 
covers \cite[Lemma]{MR239571}, we prove the following lemma, 
which is not  
used to prove our main results
in this paper. 

\begin{lem}\label{lem:closedretract}
Let $X$ be a metrizable space, 
and $F$ be a 
non-empty closed subset of $X$ with 
$\yotdim{X\setminus F}= 0$. 
If a retraction 
$r\colon X\to F$ 
and $d\in \met(X)$ satisfy the conditions 
\ref{item:r11} and \ref{item:r22} in 
Theorem \ref{thm:engelking}, 
then $r$ is a closed map. 
\end{lem}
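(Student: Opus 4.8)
The plan is to exploit that both $X$ and its subspace $F$ are metrizable, hence first countable, so that it suffices to verify $r(A)$ is sequentially closed in $F$ for every closed subset $A$ of $X$. Fixing such an $A$, I would take points $x_{n}\in A$ with $r(x_{n})\to b$ for some $b\in F$ and aim to exhibit a point of $A$ that $r$ sends to $b$. The whole argument hinges on a dichotomy for the sequence of distances $\yorrho{d}{F}(x_{n})$: either it has a subsequence tending to $0$, or it is eventually bounded away from $0$. The former situation will be controlled by condition \ref{item:r22}, the latter by condition \ref{item:r11}.

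First I would treat the case where some subsequence satisfies $\yorrho{d}{F}(x_{n_{k}})\to 0$. Condition \ref{item:r22} then yields $d(x_{n_{k}}, r(x_{n_{k}}))\to 0$, and combining this with $r(x_{n_{k}})\to b$ via the triangle inequality gives $x_{n_{k}}\to b$. Since $A$ is closed and each $x_{n_{k}}\in A$, we obtain $b\in A$; as $b\in F$ and $r$ is a retraction, $r(b)=b$, and therefore $b=r(b)\in r(A)$.

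Next I would treat the complementary case, where $\liminf_{n}\yorrho{d}{F}(x_{n})>0$, so that there exist $\epsilon\in (0,\infty)$ and an index $N$ with $x_{n}\in \yorex(F,\epsilon)$ for all $n\ge N$. Then $r(x_{n})\in r(\yorex(F,\epsilon))$ for $n\ge N$, and by condition \ref{item:r11} this image set is closed and discrete in $F$. Closedness forces the limit $b$ to lie in $r(\yorex(F,\epsilon))$, while discreteness forces the convergent sequence $\{r(x_{n})\}_{n\ge N}$ to be eventually equal to $b$; hence $b=r(x_{n})$ for some $n\ge N$ with $x_{n}\in A$, giving $b\in r(A)$ once more.

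Since the two cases are mutually exhaustive, in every situation $b\in r(A)$, which shows that $r(A)$ is closed in $F$ and hence that $r$ is a closed map. I do not expect a serious obstacle: the conceptual content is entirely absorbed into conditions \ref{item:r11} and \ref{item:r22}, and the only care needed is to make the clean split according to the behaviour of $\yorrho{d}{F}(x_{n})$ and to invoke the standard fact that a convergent sequence inside a closed discrete set is eventually constant.
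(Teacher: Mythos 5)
Your proof is correct and follows essentially the same route as the paper's: a sequential argument that splits according to whether $\yorrho{d}{F}(x_{n})$ has a subsequence tending to $0$ (handled by \ref{item:r22}) or is eventually bounded away from $0$ (handled by \ref{item:r11}). The only cosmetic difference is that where you invoke eventual constancy of a convergent sequence inside the closed discrete set $r(\yorex(F,\epsilon))$, the paper instead reduces to a sequence with pairwise distinct values $r(x_{i})$ and derives a contradiction from \ref{item:r11}.
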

\begin{proof}
Take an arbitrary closed subset of 
$A$ of  
$X$ and 
take an arbitrary point 
$y\in \cl(r(A))$. 
We shall show that 
$y\in r(A)$. 
If 
$y$ is a isolated point of $\cl(A)$, 
then 
$y\in r(A)$. 
We only need to consider the case where  
$y$ is not isolated in $\cl(A)$. 
In this case, 
there exists a sequence 
$\{x_{i}\}_{i\in \zz_{\ge 0}}$ 
in 
$A$
such that $r(x_{i})\to y$ 
as $i\to \infty$
and 
$r(x_{i})\neq r(x_{j})$ 
for all distinct 
$i, j\in \zz_{\ge 0}$. 

For the sake of contradiction, 
suppose that 
there exists 
$\epsilon \in (0, \infty)$ 
such that 
$x_{i}\in \yorex(F, \epsilon)$ for all 
$i\in \zz_{\ge 0}$. 
From the condition \ref{item:r11}, 
it follow that 
the set 
$\{\, r(x_{i})\mid i\in \zz_{\ge 0}\, \}$
is closed and  discrete in  $F$. 
This is a contradiction to the assumptions
that 
$r(x_{i})\to y$ 
as 
$i\to \infty$ 
and  
$r(x_{i})\neq r(x_{j})$ 
for all distinct 
$i, j\in \zz_{\ge 0}$. 
Thus we can take subsequence 
$\{z_{i}\}_{i\in \zz_{\ge 0}}$ 
of 
$\{x_{i}\}_{i\in \zz_{\ge 0}}$ 
such that 
$\lim_{i\to 0}\yorrho{d}{F}(z_{i})=0$. 
The condition \ref{item:r22} 
guarantees 
 $\lim_{i\to \infty}d(z_{i}, r(z_{i}))=0$. 
Hence $z_{i}\to y$ as $i\to \infty$. 
This yields $y\in A\cap F$,  and hence
$r(y)=y$. Thus $y\in r(A)$. 
Therefore we conclude that 
$r$ is a closed map. 
\end{proof}

\begin{rmk}\label{rmk:ret}
In \cite[Theorem 2.9]{brodskiy2007dimension}, 
for every  ultrametric space $(X, d)$, and for 
every 
closed subset $F$, 
Brodskiy--Dydak--Higes--Mitra constructed a 
Lipschitz retraction from $X$ to $F$. 
To remark that their retraction satisfies the 
conditions \ref{item:r11} and \ref{item:r22}, 
we briefly review the construction. 
Let $(X, d)$ be an ultrametric space, 
and $A$ be a closed subset of $X$. 
Fix $\tau\in (1, \infty)$. 
Take an order $\preceq$ on $X$ such that 
it is well-ordered on every bounded subset of $X$. 
Such a order can be obtained by 
gluing  well-orderings on $\{\, x\in X\mid k\le d(x, y)<k+1\, \}$ together, where $k\in \zz_{\ge 0}$. 
For $x\in X$, we put 
$\stst{A}{x}=\{\, a\in A\mid d(x, a)\le\tau \cdot \yorrho{d}{A}(x) \, \}$. 
We define a map $r\colon X\to A$ by 
$r(x)=\min_{\preceq}\stst{A}{x}$; namely 
$r(x)$ is the least element of $\stst{A}{x}$ with respect to 
$\preceq$. 
Then $r$ is a $(\tau^{2})$-Lipschitz retraction
(see \cite[Theorem 2.9]{brodskiy2007dimension}). 
By the construction
(especially, $\stst{A}{x}$),  the map $r$ satisfies the condition 
\ref{item:r22}. 
We now prove the condition \ref{item:r11}. 
We shall show that 
for all $\epsilon\in (0, \infty)$, 
if $x, y\in \yorex(A, \epsilon)$ satisfies 
$d(r(x), r(y))<\epsilon$, 
then $r(x)=r(y)$. 
In this setting, since $d$ satisfies the strong triangle inequality and $d(r(x), r(y))<d(x, r(x))$, 
we have $d(x, r(x))=d(x, r(y))$,  and 
hence $r(y)\in \stst{A}{x}$.  
By the definition of $r$, we obtain  $r(x)\preceq r(y)$. 
Similarly, we also obtain 
 $r(y)\preceq r(x)$. 
Hence $r(x)=r(y)$. 
Therefore $r(\yorex(A, \epsilon))$ is 
$\epsilon$-separated in $(X, d)$, and hence 
the condition \ref{item:r11} is satisfied. 
Note that 
due to Lemma  \ref{lem:closedretract}, 
the retraction $r$ is a closed map. 
Based on the fact that $r$ satisfies the 
condition \ref{item:r22}, 
as far as we consider ultrametrizable  spaces, 
in the arguments discussed below, 
we can use Brodskiy--Dydak--Higes--Mitra's retraction instead of 
Engelking's one. 
\end{rmk}

\section{Embedding into a product space}\label{sec:proof}
This section is devoted to proving 
our first main result: 
\begin{thm}\label{thm:main1}
Let $X$ be a metrizable  space, 
 and 
$F$ be a non-empty closed subset of $X$ with 
$\yotdim{X\setminus F}=0$. 
Fix $d\in \met(X)$, and 
we consider that $\yorqsp{X}{F}$ is equipped with 
the topology generated by $\yorind{d}$. 
Let $r\colon X\to F$ be a retraction 
stated in Theorem \ref{thm:engelking}
associated with $d$. 
Define a map $\yormainmap\colon X\to F\times \yorqsp{X}{F}$ by 
$\yormainmap(x)=(r(x), \yorqmap(x))$. 
Then the map 
$\yormainmap$
satisfies  the following conditions:
\begin{enumerate}[label=\textup{(P\arabic*)}]
\item\label{item:main11}
We have
$\yormainmap(a)=(a, \yorppp{F})$ for all $a\in \yorclosed$. 
\item\label{item:main12}
The map $\yormainmap$ is a topological embedding and 
$\yormainmap(X)$ is a closed subset of 
$\yorclosed\times \yorqsp{X}{F}$. 
\end{enumerate}
\end{thm}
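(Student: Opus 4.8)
The statement \ref{item:main11} is immediate: for $a\in \yorclosed$ we have $r(a)=a$ since $r$ is a retraction, and $\yorqmap(a)=\yorppp{F}$ by the definition of $\yorqmap$, so $\yormainmap(a)=(a,\yorppp{F})$.

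For \ref{item:main12} the plan is to reduce everything to a single convergence principle. Since $\yorclosed\times \yorqsp{X}{F}$ is metrizable, it suffices to work with sequences, and the key claim is: $(\star)$ whenever $\{x_{i}\}_{i\in \zz_{\ge 0}}$ is a sequence in $X$ with $\yormainmap(x_{i})\to (b,q)$ in $\yorclosed\times \yorqsp{X}{F}$, there exists $x\in X$ such that $x_{i}\to x$ in $X$ and $\yormainmap(x)=(b,q)$. Granting $(\star)$, the rest is formal. The map $\yormainmap$ is continuous because its coordinates $r$ and $\yorqmap$ are continuous (Theorem \ref{thm:engelking} and Proposition \ref{prop:inducedmetric}\ref{item:theta2}); it is injective because $\yorqmap(x)=\yorqmap(y)$ forces $x,y$ to lie on the same side of $\yorclosed$ (as $\yorqmap^{-1}(\yorppp{F})=\yorclosed$), and then $\yorqmap|_{X\setminus \yorclosed}$ is injective by Proposition \ref{prop:inducedmetric}\ref{item:theta3}, while on $\yorclosed$ the first coordinate $r$ is the identity; and it is a closed map, since for closed $A\subset X$ and $(b,q)\in \cl(\yormainmap(A))$ a sequence $x_{i}\in A$ with $\yormainmap(x_{i})\to(b,q)$ yields, via $(\star)$, a point $x\in \cl(A)=A$ with $\yormainmap(x)=(b,q)$. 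Taking $A=X$ shows $\yormainmap(X)$ is closed, and a continuous injective closed map is a closed topological embedding.

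To prove $(\star)$ I would split on the limit $q$. If $q\neq \yorppp{F}$, then $q$ belongs to the open set $\yorqsp{X}{F}\setminus\{\yorppp{F}\}$, so $\yorqmap(x_{i})$ lies in this set for large $i$, whence $x_{i}\in X\setminus \yorclosed$; since $\yorqmap|_{X\setminus \yorclosed}$ is a homeomorphism onto $\yorqsp{X}{F}\setminus\{\yorppp{F}\}$ (Proposition \ref{prop:inducedmetric}\ref{item:theta3}), the convergence $\yorqmap(x_{i})\to q$ transports to $x_{i}\to q$ in $X$, and continuity of $r$ forces $r(q)=b$; thus $x=q$ works.

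The substantive case is $q=\yorppp{F}$, and this is exactly where the extra property \ref{item:r22} of the Engelking retraction is indispensable. Here $\yorqmap(x_{i})\to \yorppp{F}$ means $\yorind{d}(\yorqmap(x_{i}),\yorppp{F})\to 0$, i.e.\ $\yorrho{d}{F}(x_{i})\to 0$; condition \ref{item:r22} then gives $d(x_{i},r(x_{i}))\to 0$, and combining this with $r(x_{i})\to b$ via the triangle inequality yields $x_{i}\to b$ in $(X,d)$. Since $b\in \yorclosed$ we have $\yormainmap(b)=(b,\yorppp{F})=(b,q)$, so $x=b$ works, completing $(\star)$. I expect this last case to be the main obstacle: mere continuity of $r$ gives no control over $x_{i}$ as $\yorqmap(x_{i})\to\yorppp{F}$ — the fibre of $\yorqmap$ over $\yorppp{F}$ is the whole of $\yorclosed$ — and it is precisely the self-reference estimate \ref{item:r22} that pins the sequence to an honest limit in $X$.
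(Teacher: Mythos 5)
Your proof is correct and follows essentially the same route as the paper's: continuity and injectivity are formal, closedness is reduced to a sequential argument split on whether the second coordinate of the limit is $\yorppp{F}$, using Proposition \ref{prop:inducedmetric}\ref{item:theta3} in the first case and the self-reference condition \ref{item:r22} in the second. Your packaging via the convergence principle $(\star)$ is slightly cleaner (it absorbs the paper's separate treatment of isolated points of $\cl(\yormainmap(A))$ and makes the injectivity check explicit), but the mathematical content is identical.
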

\begin{proof}
First note that the map $\yormainmap$ is continuous and 
injective. 
The condition \ref{item:main11} is deduced 
from the definition of $\yormainmap$. 
To prove the condition \ref{item:main12}, 
it suffices to show that 
$\yormainmap$ is a closed map. 
Take an arbitrary closed subset $A$ of $X$. 
To verify  that $\yormainmap(A)$ is 
closed in $F\times \yorqsp{X}{F}$, 
take $y\in \cl(\yormainmap(A))$. 
If 
$y$ is isolated in $ \cl(\yormainmap(A))$, 
then $y\in \yormainmap(X)$. 
We may assume that 
$y$ is non-isolated in 
$ \cl(\yormainmap(A))$. 
Then we can find a sequence 
$\{x_{i}\}_{\in \zz_{\ge 0}}$ in $A$
such that 
$\yormainmap(x_{i})\to y$
as
$i\to \infty$. 
Put $y=(z, s)$, where $z\in F$ and $s\in \yorqsp{X}{F}$. 
Remark that  in this situation, 
we have $r(x_{i})\to z$ and 
$\yorqmap(x_{i})\to s$ as 
$i\to \infty$. 
We divide the proof into two cases. 

Case 1.~($s=\yorqmap(w)$ for some $w\in X\setminus F$): 
Since 
$\yorqmap|_{X\setminus F}\colon 
X\setminus F\to \yorqsp{X}{F}\setminus \{\yorppp{F}\}$ is a homeomorphism (see 
\ref{item:theta3} in 
Proposition 
\ref{prop:inducedmetric}), 
the convergence 
$\yorqmap(x_{i})\to \yorqmap(w)(=s)$ implies 
that $x_{i}\to w$ as 
$i\to \infty$. Thus $w\in A$. 
Using the continuity of $\yormainmap$,  
we obtain 
$y=\yormainmap(w)$,  and hence 
$y\in \yormainmap(A)$. 

Case 2.~($s=\yorppp{F}$): 
In this case, 
we have $\yorind{d}(x_{i}, \yorppp{F})\to 0$, 
and hence $\yorrho{d}{F}(x_{i})\to 0$ as 
$i\to \infty$. 
The condition \ref{item:r22} in Theorem 
\ref{thm:engelking} implies that 
$d(x_{i}, r(x_{i}))\to 0$ as $i\to \infty$. 
Since 
$r(x_{i})\to z$ as $i\to \infty$, we have 
$x_{i}\to z$ as $i\to \infty$. 
Thus $z\in A$. 
Using the continuity of 
$\yormainmap$, we have 
$y= \yormainmap(z)$,  and hence 
$y\in \yormainmap(A)$. 
Therefore 
$\yormainmap(A)$ is closed in $F\times \yorqsp{X}{F}$. 
This finishes the proof. 
\end{proof}

\begin{rmk}
By Proposition \ref{prop:inducedzero} 
the space $\yorqsp{X}{F}$ 
in Theorem 
\ref{thm:main1}
satisfies 
$\yotdim{\yorqsp{X}{F}}=0$. 
\end{rmk}
\begin{rmk}
Let $X$ be a topological space,  and $F$ be a 
closed subset of $X$. 
There are 
some functional   decompositions
similar to Theorem \ref{thm:main1}. 
Namely, under certain  assumptions, 
in some sense, 
a function space of $X$ can be factorized 
into a product of a function space of $F$ and 
a function space of a quotient space $X/F$
(see for example \cite[Corollary 2.3]{MR972840} and 
\cite[Lemma 2.2]{MR3356002}). 
\end{rmk}

\section{Applications}\label{sec:app}
In this section, we prove applications of 
Theorem \ref{thm:main1}. 
\subsection{Fractal dimensions}
We first review the fractal dimensions. 
Most of definitions and 
statements on the fractal dimensions  in this subsection 
are shared 
with the author's preprint 
\cite{ishiki2021dimension}.

In this paper, we mainly deal with 
the Hausdorff dimension
$\yorhdim{X, d}$, 
the packing dimension
$\yorpdim{X, d}$, 
the upper box dimension
$\yorubdim{X, d}$, 
and
the Assouad dimension
$\yoradim{X, d}$.

\subsubsection{The Hausdorff dimension}
Let $(X, d)$ be a metric space. 
For $\delta \in (0,\infty)$, 
we denote by $\mathbf{F}_{\delta}(X, d)$ the set of all subsets of $X$ with diameter smaller than $\delta$. 
For  $s\in [0,\infty)$, and $\delta\in (0, \infty)$, 
we define the measure $\mathcal{H}_{\delta}^{s}$ 
on $X$ as
\[
\mathcal{H}_{\delta}^s(A)=
\inf\left\{\, 
\sum_{i=1}^{\infty}\yodiam(A_i)^s\ 
\middle| \ A\yosub\bigcup_{i=1}^{\infty}A_i,\ A_i\in \mathbf{F}_{\delta}(X, d)\,\right\}.
\]
For $s\in (0, \infty)$
we define 
the \emph{$s$-dimensional Hausdorff measure 
 $\mathcal{H}^s$}
on $(X, d)$  as 
$\mathcal{H}^s(A)=
\sup_{\delta\in (0,\infty)}\mathcal{H}_{\delta}^s(A)$. 
We denote by $\hdim(X, d)$ 
the \emph{Hausdorff dimension of $(X, d)$} defined as
$\hdim(X, d)=\sup \{\,s\in[0,\infty)\mid 
\mathcal{H}^s(X)=\infty\,\}
             =\inf\{\,s\in [0,\infty)\mid 
             \mathcal{H}^s(X)=0\,\}$.

\subsubsection{The packing dimension}
Let $(X, d)$ be a metric space. 
For a subset $A$ of $X$, and for $\delta\in (0, \infty)$, 
we denote by $\mathbf{Pa}_{\delta}(A)$ the set of all 
finite or countable sequence $\{r_{i}\}_{i=1}^{N} (N\in \zz_{\ge 1}\cup\{\infty\})$ in $(0, \delta)$
 for which 
there exists a sequence $\{x_{i}\}_{i=1}^{N}$ in $A$ such 
that if $i\neq j$, then we have 
$B(x_{i}, r_{i})\cap B(x_{j}, r_{j})=\emptyset$. 
For $s\in [0, \infty)$,  $\delta\in (0, \infty)$,
and  a subset $A$ of $X$,  
we define the quantity  $\widetilde{\mathcal{P}}_{\delta}^{s}(A)$ by 
\[
\widetilde{\mathcal{P}}_{\delta}^{s}(A)=
\sup\left\{\, \sum_{i=1}^{N}r_{i}^{s}\ 
\middle| \ \{r_{i}\}_{i=1}^{N} \in \mathbf{Pa}_{\delta}(A)
\, \right\}. 
\]
We then define the \emph{$s$-dimensional pre-packing measure 
$\widetilde{\mathcal{P}}_{0}^{s}$} on $(X, d)$ as 
$\widetilde{\mathcal{P}}_{0}^{s}(A)
=\inf_{\delta\in (0, \infty)}
\widetilde{\mathcal{P}}_{\delta}^{s}$, 
and 
we define the 
\emph{$s$-dimensional packing measure } 
$\mathcal{P}^{s}$ on 
$(X, d)$
 as
\[
\mathcal{P}^{s}(A)=
\inf\left\{\, \sum_{i=1}^{\infty}\widetilde{\mathcal{P}}_{0}^{s}(S_{i})\ 
\middle|\  A\yosub \bigcup_{i=1}^{\infty}S_{i}
\, \right\}. 
\]
We denote by $\pdim(X, d)$ 
the \emph{packing  dimension of $(X, d)$},  which is  defined as
$\pdim(X, d)=\sup \{\,s\in[0,\infty)\mid 
\mathcal{P}^s(X)=\infty\,\}
             =\inf\{\,s\in [0,\infty)\mid 
             \mathcal{P}^s(X)=0\,\}$.

\subsubsection{Box dimensions}
For a metric space $(X, d)$, and for $r\in (0, \infty)$, 
 a subset $A$ of $X$ is said to be an 
 \emph{$r$-net} if 
 $X=\bigcup_{a\in A}B(a, r)$. 
 We denote by $\mnum_{d}(X, r)$ the least  cardinality  of  $r$-nets of $X$.  
We define 
the \emph{upper box dimension $\ubdim(X, d)$} 
by 
\begin{align*}
\ubdim(X, d)&=\limsup_{r\to 0}\frac{\log \mnum_{d}(X, r)}{-\log r}. 
\end{align*}

\subsubsection{The Assouad dimension}

For a metric space $(X, d)$, 
we denote by  $\adim(X, d)$ 
the \emph{Assouad dimension 
of $(X, d)$}
defined  by the infimum of all 
$\lambda\in (0, \infty)$ for which 
there exists $C\in (0, \infty)$ such that 
for all $R, r\in (0, \infty)$ with $r<R$ and for all $x\in X$
we have 
$\mnum_{d}(B(x, R), r)\le 
C\left(R/r\right)^{\lambda}$. 
If such $\lambda$ does not exist, we define 
$\adim(X, d)=\infty$. 
We say that a metric space is \emph{doubling} if 
its Assouad dimension is finite. 

The next theorem is deduced  from 
\cite{Szpilrajn1937}, \cite[Lemma 2.4.3]{fraser2020assouad},  and 
\cite[Chapters 2 and  3]{falconer2004fractal}. 
\begin{thm}\label{thm:diminequalities}
The following statements hold true:
\begin{enumerate}
\item 
If $(X, d)$ is  a  bounded metric space, 
then we have 
\[
\hdim(X, d)\le \pdim(X, d)\le \ubdim(X, d)\le \adim(X, d). 
\]
\item 
If $(X, d)$ is a metric space, then we have 
\[
\hdim(X, d)\le \pdim(X, d)\le \adim(X, d). 
\]
\end{enumerate}
\end{thm}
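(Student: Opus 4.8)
The plan is to prove the chain one inequality at a time, carefully separating the estimate $\hdim\le\pdim$, which needs no boundedness, from the two estimates $\pdim\le\ubdim$ and $\ubdim\le\adim$, which do, and then to bootstrap the general statement $\pdim\le\adim$ from the bounded one by a decomposition argument.

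First I would establish $\hdim(X,d)\le\pdim(X,d)$, valid for every metric space. The heart of this is a comparison of the $s$-dimensional Hausdorff measure $\mathcal{H}^s$ with the $s$-dimensional packing measure $\mathcal{P}^s$. Since $\mathcal{P}^s$ is obtained by regularizing the pre-packing quantity $\widetilde{\mathcal{P}}_0^s$ through an infimum over countable covers $A\yosub\bigcup_i S_i$, while $\mathcal{H}^s$ is built from an infimum over covers by small-diameter sets, I would show $\mathcal{H}^s(A)\le C\,\mathcal{P}^s(A)$ for a constant $C$ depending only on $s$ (the constant absorbing the radius-versus-diameter discrepancy between the two definitions), using a Vitali-type covering argument. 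Because $\hdim$ and $\pdim$ are the critical exponents at which $\mathcal{H}^s$ and $\mathcal{P}^s$ respectively drop from $\infty$ to $0$, this measure comparison immediately yields $\hdim(X,d)\le\pdim(X,d)$.

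Next, assuming $X$ bounded, I would prove $\pdim\le\ubdim$ and $\ubdim\le\adim$. For the first, note that taking the single-element cover $\{A\}$ in the infimum defining $\mathcal{P}^s$ gives $\mathcal{P}^s(A)\le\widetilde{\mathcal{P}}_0^s(A)$. I would then compare a $\delta$-packing by disjoint balls with the covering number $\mnum_d(X,r)$: grouping the balls of a packing by dyadic radius scales and using that packing numbers and covering numbers are comparable up to a shift of scale, one sees that for any $s>\ubdim(X,d)$ and $\ubdim(X,d)<s'<s$ the sum $\sum_i r_i^s$ is dominated by a geometric series in the scale, whence $\widetilde{\mathcal{P}}_0^s(X)=0$, so $\mathcal{P}^s(X)=0$ and $\pdim(X,d)\le s$; the infimum over such $s$ gives $\pdim\le\ubdim$. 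For $\ubdim\le\adim$, I would take any $\lambda>\adim(X,d)$ with associated constant $C$ and apply the Assouad bound $\mnum_d(B(x,R),r)\le C(R/r)^{\lambda}$ with $R=\di(X)$; since a single ball of radius $\di(X)$ covers the bounded set $X$, this gives $\mnum_d(X,r)\le C(\di(X)/r)^{\lambda}$, hence $\ubdim(X,d)\le\lambda$.

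Finally, for the general statement I would drop $\ubdim$ from the chain and pass from bounded pieces to the whole space. Fixing a basepoint $x_0\in X$, set $X_n=\{\,x\in X\mid n\le d(x,x_0)<n+1\,\}$, so that $X=\bigcup_n X_n$ with each $X_n$ bounded. The bounded case gives $\pdim(X_n,d)\le\ubdim(X_n,d)\le\adim(X_n,d)$, and monotonicity of the Assouad dimension under subsets yields $\adim(X_n,d)\le\adim(X,d)$. Countable stability of the packing dimension, $\pdim(\bigcup_n X_n,d)=\sup_n\pdim(X_n,d)$, then gives $\pdim(X,d)\le\adim(X,d)$, which together with $\hdim\le\pdim$ finishes the second statement. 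I expect the main obstacle to be the measure-level comparison behind $\hdim\le\pdim$: reconciling the supremum-over-packings in $\widetilde{\mathcal{P}}_0^s$ with the infimum-over-covers in $\mathcal{H}^s$ in a general metric space is where the covering lemma and the bookkeeping of constants are genuinely needed, with the uniform-in-scale packing/covering comparison for $\pdim\le\ubdim$ a close second.
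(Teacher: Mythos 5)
Your proof is correct, and it matches the paper's treatment: the paper does not prove Theorem \ref{thm:diminequalities} at all, deducing it instead from \cite{Szpilrajn1937}, \cite[Lemma 2.4.3]{fraser2020assouad}, and \cite[Chapters 2 and 3]{falconer2004fractal}, and your argument is precisely the standard one found there --- the maximal-packing comparison $\mathcal{H}^s\le C\,\mathcal{P}^s$ for $\hdim\le\pdim$, the dyadic-scale counting for $\pdim\le\ubdim$, the single-ball covering bound for $\ubdim\le\adim$ on bounded spaces, and bounded annuli plus countable stability of $\pdim$ and subset monotonicity of $\adim$ (the paper's Proposition \ref{prop:subsub}) for the unbounded case. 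Your structural observation that boundedness is genuinely needed only for $\ubdim\le\adim$ is likewise consistent with the paper's remark that $\ubdim(X,d)=\infty$ for unbounded spaces.
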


\begin{rmk}
If $(X, d)$ is unbounded, then 
the inequality $\ubdim(X, d)\le \adim(X, d)$ is 
not always true 
since in this setting,  $\ubdim(X, d)=\infty$. 
For example, the Euclidean space $(\rr, d_{\rr})$
satisfies that $\ubdim(\rr, d_{\rr})=\infty$ and 
$\adim(\rr, d_{\rr})=1$.
\end{rmk}

From the definitions of 
fractal dimensions, 
we deduce the next proposition. 
\begin{prop}\label{prop:subsub}
Let $(X, d)$ be a metric space, and 
$A$ be a subset of $X$. 
Let $\mathbf{Dim}$ be any one of 
$\hdim$,  $\pdim$, 
$\ubdim$, or $\adim$. 
Then we have the inequality 
$\mathbf{Dim}(A, d|_{A^{2}})\le 
\mathbf{Dim}(X, d)$. 
\end{prop}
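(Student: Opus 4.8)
The plan is to treat each of the four dimensions separately and to prove, in each case, the general monotonicity principle that $\mathbf{Dim}$ does not increase under passage to a subset; the assertion is the special case where the ambient space is $(X,d)$ and the subset carries $d|_{A^{2}}$. In every case the strategy is the same: I would compare the combinatorial quantity defining the dimension on $(A, d|_{A^{2}})$ with the corresponding quantity on $(X,d)$ and check that passing to the subspace alters it only by an absolute multiplicative constant or by a fixed factor in the scale, neither of which can move the threshold exponents that define the dimension.

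For the Hausdorff dimension the comparison is immediate: if $A \yosub \bigcup_{i} A_{i}$ with $A_{i}\yosub X$, then $A\yosub \bigcup_{i}(A_{i}\cap A)$ with each $A_{i}\cap A\yosub A$ and $\yodiam(A_{i}\cap A)\le \yodiam(A_{i})$, so the intrinsic measures of $(A, d|_{A^{2}})$ coincide with the restriction to $A$ of the $\mathcal H^{s}$ on $(X,d)$; then $\mathcal H^{s}(A)\le \mathcal H^{s}(X)$ by monotonicity yields $\hdim(A, d|_{A^{2}})\le \hdim(X,d)$. For the box and Assouad dimensions I would run a net/covering argument with a harmless factor $2$: starting from an $r$-net of $X$ (respectively a covering of a ball $B(x,R;d)$ by $r$-balls realizing the Assouad bound), I keep only the balls meeting $A$ and replace each center by a point of $A$ lying in it; the resulting $2r$-balls centered in $A$ cover $A$ (respectively cover $B(x,R;d|_{A^{2}})$), giving $\mnum_{d|_{A^{2}}}(A, 2r)\le \mnum_{d}(X, r)$ and $\mnum_{d|_{A^{2}}}(B(x,R;d|_{A^{2}}), 2r)\le \mnum_{d}(B(x,R;d), r)$. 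Dividing logarithms by $-\log(2r)$ and letting $r\to 0$, where $(-\log r)/(-\log(2r))\to 1$, yields $\ubdim(A, d|_{A^{2}})\le \ubdim(X,d)$, and absorbing the resulting $2^{\lambda}$ into the Assouad constant for every admissible exponent $\lambda$ yields $\adim(A, d|_{A^{2}})\le \adim(X,d)$.

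I expect the packing dimension to be the main obstacle, since disjointness of balls is not inherited by subspaces: two closed balls centered in $A$ whose traces on $A$ are disjoint may still overlap in $X$, so an intrinsic packing of $A$ need not be a packing of $X$, and the naive inclusion of packings runs the wrong way. The remedy I would use is a half-radius trick. If $B(x_{i}, r_{i}; d|_{A^{2}})$ and $B(x_{j}, r_{j}; d|_{A^{2}})$ are disjoint in $A$, then testing the definition at the centers $x_{i}, x_{j}\in A$ forces $d(x_{i}, x_{j})> r_{i}\lor r_{j}\ge (r_{i}+r_{j})/2$, whence $B(x_{i}, r_{i}/2; d)$ and $B(x_{j}, r_{j}/2; d)$ are disjoint in $X$. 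Thus every intrinsic packing $\{r_{i}\}\in \mathbf{Pa}_{\delta}$ of a set $E\yosub A$ produces an honest packing $\{r_{i}/2\}$ of $E$ in $(X,d)$, so the intrinsic $\widetilde{\mathcal P}_{\delta}^{s}(E)$ is bounded by $2^{s}$ times the ambient $\widetilde{\mathcal P}_{\delta/2}^{s}(E)$; letting $\delta\to 0$ and then taking countable covers, which may be intersected with $A$ using monotonicity of $\widetilde{\mathcal P}_{0}^{s}$ in $X$, propagates this to $\mathcal P^{s}$, giving intrinsic $\mathcal P^{s}(A)\le 2^{s}\mathcal P^{s}(A)\le 2^{s}\mathcal P^{s}(X)$. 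Hence $\mathcal P^{s}(X)=0$ forces the intrinsic packing measure of $A$ to vanish, and comparing the defining infima gives $\pdim(A, d|_{A^{2}})\le \pdim(X,d)$, completing all four cases.
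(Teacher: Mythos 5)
Your proposal is correct. Note that the paper offers no proof at all here---Proposition \ref{prop:subsub} is stated as something one ``deduces from the definitions''---so your write-up simply supplies the details the author left implicit, and it does so accurately: the Hausdorff case by intersecting covers with $A$, and the box and Assouad cases by the recentering argument with the harmless factor $2$ (which indeed disappears in the limit $(-\log r)/(-\log 2r)\to 1$, resp.\ is absorbed into the constant $C2^{\lambda}$). You are also right to flag the packing dimension as the one genuinely delicate case, since disjointness of the trace balls $B(x_{i},r_{i};d|_{A^{2}})$ does not imply disjointness in $X$; your half-radius trick is the standard remedy, and the chain of estimates (intrinsic $\widetilde{\mathcal{P}}^{s}_{\delta}\le 2^{s}\,\widetilde{\mathcal{P}}^{s}_{\delta/2}$ ambient, then passing to $\widetilde{\mathcal{P}}^{s}_{0}$, then to $\mathcal{P}^{s}$ via covers intersected with $A$ and monotonicity of the ambient pre-measure) is sound, with the factor $2^{s}$ being immaterial for the critical exponent. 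As a remark, the packing case can be obtained with no new idea from your box-dimension case via the modified-box characterization of $\pdim$ as the infimum over countable covers of the supremum of upper box dimensions of the pieces, but your direct measure-theoretic route is perfectly valid and self-contained given the paper's definitions.
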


We denote by $\yorlset$
the set of all $(a_{1}, a_{2}, a_{3}, a_{4})\in [0, \infty]^{4}$
such that $a_{1}\le a_{2}\le a_{3}\le a_{4}$. 
We also denote by 
$\yorrset$ the set of all $(b_{1}, b_{2}, b_{3})\in [0, \infty]^{3}$ such that 
$b_{1}\le b_{2}\le b_{3}$.

Based on Theorem \ref{thm:diminequalities}, 
we define sets of metrics with prescribed fractal dimensions. 
Let $X$ be a metrizable space. 
For $\yorbd{a}=(a_{1}, a_{2}, a_{3}, a_{4})\in \yorlset$, 
let $\yordimset{X}{\yorbd{a}}$ be the 
set of all bounded metric $d\in \met(X)$ 
satisfying that
$\yorhdim{X, d}=a_{1}$, 
$\yorpdim{X, d}=a_{2}$, 
$\yorubdim{X, d}=a_{3}$, 
and 
$\yoradim{X, d}=a_{4}$. 
For $\yorbd{b}=(b_{1}, b_{2}, b_{3})\in \yorrset$, 
let $\yordimsetq{X}{\yorbd{b}}$ be the 
set of all $d\in \met(X)$ with 
$\yorhdim{X, d}=b_{1}$, 
$\yorpdim{X, d}=b_{2}$, 
and 
$\yoradim{X, d}=b_{3}$. 
Note that $\yordimsetq{X}{\yorbd{b}}$ can 
contain unbounded metrics. 
For the sake of convenience, 
we define 
$\yorzeron=(0, 0, 0, 0)\in \yorlset$
and 
$\yorzeroz=(0, 0, 0)\in \yorrset$.

\begin{lem}\label{lem:nullzero}
If a separable metrizable space  $X$ 
  satisfies  $\yotdim{X}=0$, 
then there exists a metric $v\in \yordimset{X}{\yorzeron}$. 
In this case, we also have 
$v\in \yordimsetq{X}{\yorzeroz}$. 
\end{lem}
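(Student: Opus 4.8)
The plan is to reduce both assertions to the construction of a single bounded metric of Assouad dimension zero. Since all four fractal dimensions are non-negative and, for bounded metrics, satisfy $\hdim\le\pdim\le\ubdim\le\adim$ by Theorem \ref{thm:diminequalities}, any bounded metric $v$ with $\adim(X,v)=0$ automatically satisfies $\hdim(X,v)=\pdim(X,v)=\ubdim(X,v)=\adim(X,v)=0$, that is, $v\in\yordimset X{\yorzeron}$; the membership $v\in\yordimsetq X{\yorzeroz}$ then follows at once, since $\yordimsetq X{\yorzeroz}$ constrains only the Hausdorff, packing and Assouad dimensions (all zero for $v$) and imposes no boundedness requirement. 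I will work under the assumption that $X$ is separable, which holds in the intended applications and is in fact forced by the conclusion, since a metric of finite Assouad dimension makes $X$ separable (each of its balls is totally bounded).

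First I would embed $X$ into a Cantor space. By Proposition \ref{prop:sepembzero} there is a topological embedding $g\colon X\to C$ with $C=\{0,1\}^{\zz_{\ge 0}}$. On $C$ I would place a rapidly contracting ultrametric: fixing $a,b\in(1,\infty)$, set $t_n=a^{-b^{n}}$, and for distinct $x=(x_i), y=(y_i)\in C$ define $v_C(x,y)=t_{n(x,y)}$, where $n(x,y)=\min\{i: x_i\ne y_i\}$ (and $v_C(x,x)=0$). As $(t_n)$ is strictly decreasing to $0$, the function $v_C$ is a bounded ultrametric generating the product topology of $C$. Finally I would pull $v_C$ back along $g$, setting $v(x,y)=v_C(g(x),g(y))$; this is a bounded metric in $\met(X)$, and $(X,v)$ is isometric to the subspace $(g(X),v_C|_{g(X)^2})$ of $(C,v_C)$.

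The heart of the argument is the verification that $\adim(C,v_C)=0$. Each closed ball $B(x,R)$ is a cylinder fixing the first $p$ coordinates, where $p=\min\{n: t_n\le R\}$, and covering it by balls of radius $r$ amounts to partitioning it into cylinders of depth $q=\min\{n: t_n\le r\}$; hence $N_{v_C}(B(x,R),r)=2^{q-p}$, while $R/r> t_p/t_{q-1}=a^{b^{p}-b^{q-1}}$. Because the gaps $\log(t_p/t_{p+1})$ grow doubly exponentially, a direct estimate shows that for every $\lambda\in(0,\infty)$ the quantity $2^{q-p}(t_{q-1}/t_p)^{\lambda}$ is bounded over all $p<q$, so $N_{v_C}(B(x,R),r)\le C_\lambda (R/r)^{\lambda}$ for a finite constant $C_\lambda$; thus $\adim(C,v_C)\le\lambda$ for every $\lambda>0$, giving $\adim(C,v_C)=0$. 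By Proposition \ref{prop:subsub} together with the isometry invariance of the fractal dimensions, $\adim(X,v)\le\adim(C,v_C)=0$, which by the first paragraph completes both assertions. I expect the main obstacle to be exactly this quantitative counting step: one must choose the decay of $(t_n)$ fast enough that the constant branching factor $2^{q-p}$ is absorbed into $(R/r)^\lambda$ for arbitrarily small $\lambda$, and then confirm that the relevant supremum is finite for each fixed $\lambda$.
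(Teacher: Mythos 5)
Your proposal is correct and takes essentially the same route as the paper: embed $X$ into a Cantor space via Proposition \ref{prop:sepembzero}, put a metric of Assouad dimension zero on the Cantor space, and finish with Theorem \ref{thm:diminequalities} and Proposition \ref{prop:subsub} --- the only difference being that where the paper simply cites earlier results of the author for the existence of such a metric, you construct the ultrametric with values $t_{n}=a^{-b^{n}}$ explicitly and verify $\adim=0$ by the cylinder count (note the harmless sign slip: $t_{p}/t_{q-1}=a^{b^{q-1}-b^{p}}$, not $a^{b^{p}-b^{q-1}}$, and your subsequent bound uses the correct quantity). Your remark that separability is forced by the conclusion also correctly supplies the hypothesis that the paper's appeal to Proposition \ref{prop:sepembzero} leaves implicit.
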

\begin{proof}
Due to Proposition \ref{prop:sepembzero}, 
we can consider that $X$ is a subset of a Cantor 
space $\Gamma$. 
By \cite[theorem 1.8]{Ishiki2019} or \cite[Lemma 3.7]{ishiki2021dimension}, there exists a metric 
$d\in \met(\Gamma)$
with $\yoradim{\Gamma, d}=0$. 
Theorem \ref{thm:diminequalities} implies that 
$d\in \yordimset{X}{\yorzeron}$. 
Put $v=d|_{X^{2}}$. 
Then, according to  
Proposition \ref{prop:subsub}, 
we have 
$v\in \yordimset{X}{\yorzeron}$. 
\end{proof}

The proof of the following can be found  in \cite{falconer2004fractal} and 
\cite{fraser2020assouad}. 
\begin{prop}\label{prop:productdim}
Let $(X, d)$ and $(Y, e)$ be metric spaces, 
and $k=1$ or $k=\infty$. 
Then the following statements hold true: 
\begin{enumerate}
\item 
Let $\mathbf{Dim}$ be any one of $\pdim$, 
$\ubdim$, or $\adim$. Then we have 
\[
\mathbf{Dim}(X\times Y, d\times_{k}e)\le 
\mathbf{Dim}(X, d)+\mathbf{Dim}(Y, e).
\] 
\item 
$\hdim(X\times Y, d\times_{k}e)\le \hdim(X, d)+
\pdim(Y, e)$. \label{item:haus2}
\end{enumerate}
\end{prop}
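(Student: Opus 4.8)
The plan is to prove the product dimension inequalities in Proposition \ref{prop:productdim} by reducing everything to counting arguments at a single scale, using the comparability of the $\ell^1$ and $\ell^\infty$ product metrics established earlier. Since $d\times_1 e$ and $d\times_\infty e$ generate the same topology and in fact satisfy $d\times_\infty e \le d\times_1 e \le 2\,(d\times_\infty e)$, and since all the fractal dimensions in play are invariant under bi-Lipschitz equivalence, it suffices to treat the case $k=\infty$. I will use $d\times_\infty e$ throughout because its balls factor cleanly: for $p=(a,b)$ we have $B(p,r;d\times_\infty e)=B(a,r;d)\times B(b,r;e)$. This product structure is precisely what makes the covering and net estimates multiply in the right way.

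The core estimate I would establish first is a covering-number inequality: for every $r\in(0,\infty)$ and every $x\in X$, $y\in Y$,
\begin{equation}\label{eq:plan-netprod}
\mnum_{d\times_\infty e}\bigl(B(x,R;d)\times B(y,R;e),\, r\bigr)\le \mnum_d\bigl(B(x,R;d),r\bigr)\cdot \mnum_e\bigl(B(y,R;e),r\bigr),
\end{equation}
obtained by taking products of optimal $r$-nets in each factor, and the global version with $X\times Y$ in place of the two balls. For the upper box dimension the statement follows by inserting the global form of \eqref{eq:plan-netprod} into the definition of $\ubdim$ and using $\log(MN)=\log M+\log N$ together with the subadditivity of $\limsup$. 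For the Assouad dimension I would feed the local form \eqref{eq:plan-netprod} into the definition of $\adim$: if $\mnum_d(B(x,R),r)\le C_1(R/r)^{\lambda_1}$ and $\mnum_e(B(y,R),r)\le C_2(R/r)^{\lambda_2}$ for all $r<R$, then the product bound gives $\le C_1C_2 (R/r)^{\lambda_1+\lambda_2}$, so $\adim(X\times Y)\le \adim(X,d)+\adim(Y,e)$, with the convention that the inequality is vacuous when either dimension is infinite.

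For the packing dimension I would use its characterization via the modified box-counting (upper box) dimension on a countable-cover optimization, or more directly the standard inequality $\pdim(X\times Y)\le \pdim(X)+\pdim(Y)$ which can be derived from the upper box estimate by the countable decomposition in the definition of $\mathcal P^s$; here I would invoke the results of \cite{falconer2004fractal} and \cite{fraser2020assouad} to handle the passage from box to packing dimension cleanly rather than reprove it. The Hausdorff statement in \ref{item:haus2}, namely $\hdim(X\times Y)\le \hdim(X)+\pdim(Y)$, is the genuinely asymmetric one and is the place I expect the real difficulty: it is \emph{not} a symmetric sum of Hausdorff dimensions, and the sharp inequality mixing Hausdorff and packing dimensions requires the measure-theoretic product argument (building an efficient cover of $X\times Y$ from a Hausdorff-efficient cover of $X$ and a packing-efficient structure on $Y$). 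Rather than reconstruct this delicate estimate, I would cite it from \cite{falconer2004fractal}, noting only that the bi-Lipschitz reduction from $d\times_k e$ to $d\times_\infty e$ and the factorization of balls make the cited Euclidean-style argument applicable verbatim in the general metric setting. Thus the main obstacle is \ref{item:haus2}, and my strategy is to isolate it and lean on the literature, while giving self-contained net-counting proofs for the box and Assouad cases and a short reduction for the packing case.
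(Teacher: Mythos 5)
Your proposal is correct, and it is in fact more detailed than the paper's own treatment: the paper offers no argument at all for Proposition \ref{prop:productdim}, stating only that ``the proof can be found in \cite{falconer2004fractal} and \cite{fraser2020assouad}.'' Your reduction is the standard one and it works: $d\times_{\infty}e\le d\times_{1}e\le 2(d\times_{\infty}e)$, all four dimensions are bi-Lipschitz invariant, and under $d\times_{\infty}e$ balls factor as $B((a,b),r)=B(a,r;d)\times B(b,r;e)$, so products of $r$-nets are $r$-nets and your counting inequality $\mnum_{d\times_{\infty}e}(\cdot,r)\le \mnum_{d}(\cdot,r)\cdot\mnum_{e}(\cdot,r)$ gives the $\ubdim$ and $\adim$ bounds exactly as you say (with the infinite cases vacuous). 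Two points deserve flagging where you lean on the literature. First, for $\pdim$ your ``countable decomposition'' route implicitly uses the identification of the packing dimension, as defined here via the measures $\mathcal{P}^{s}$, with the modified upper box dimension $\inf\{\sup_{i}\ubdim(A_i,d)\mid X\yosub\bigcup_{i}A_{i}\}$; this is standard but should be cited in a form valid for general metric spaces (e.g.\ \cite{fraser2020assouad}), since \cite{falconer2004fractal} states the product formulas for subsets of Euclidean space. Second, for the mixed inequality $\hdim\le\hdim+\pdim$ your assessment is right that this is the delicate item, but it is less deep than you suggest: given $s>\hdim(X,d)$ and $t>\ubdim(Y,e)$ with $Y$ totally bounded, one covers $X$ by sets $U_{i}$ with $\sum_{i}\yodiam(U_{i})^{s}$ small, covers $Y$ at the matching scale $\yodiam(U_{i})$ by at most $\yodiam(U_{i})^{-t}$ (up to a constant) sets, and the $\ell^{\infty}$-diameter of each product piece is $\le\yodiam(U_{i})$, giving $\hdim\le\hdim+\ubdim$; the upgrade from $\ubdim$ to $\pdim$ is then the same countable decomposition of $Y$ plus countable stability of $\hdim$, and this argument is purely metric, so your remark that the Euclidean proof applies ``verbatim'' is justified rather than merely hoped for. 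With those citations made precise, your plan is a complete and correct proof, whereas the paper's is an appeal to the references.
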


\begin{rmk}
In (\ref{item:haus2}) of Proposition \ref{prop:productdim}, 
the packing dimension 
$\pdim(Y, e)$ in the right hand side 
can not be replaced with the Hausdorff dimension
in general. 
Considering only the Hausdorff dimension, 
we obtain the opposite inequality 
$\hdim(X, d)+
\hdim(Y, e)\le \hdim(X\times Y, d\times_{k}e)$. 
\end{rmk}

Recall that 
a subset $S$ of $[0, \infty)$ is 
said to be 
\emph{characteristic} if 
$0\in S$ and for all $t\in (0, \infty)$, 
there exists $s\in S\setminus \{0\}$ such that 
$s\le t$.

For an ultrametrizable space $X$, 
for a characteristic subset $S$ of $[0, \infty)$, 
for $\yorbd{a}\in \yorlset$ and 
$\yorbd{b}\in \yorrset$, 
we put 
$\yordimsetu{X}{S}{\yorbd{a}}
=\ult{X}{S}\cap \yordimset{X}{\yorbd{a}}$ and 
$\yordimsetqu{X}{S}{\yorbd{b}}
=\ult{X}{S}\cap \yordimsetq{X}{\yorbd{b}}$. 
Proposition \ref{prop:productdim} implies the 
following corollary. 
\begin{cor}\label{cor:prodprod}

Take arbitrary points $\yorbd{a}\in \yorlset$ and 
$\yorbd{b}\in \yorrset$. 
Let $X$ and $Y$ be metrizable spaces, 
 $S$ be a characteristic subset of $[0, \infty)$. 
Then the following statements hold true: 
\begin{enumerate}
\item\label{item:corp:1}
If $d\in \yordimset{X}{\yorbd{a}}$ and 
$v\in \yordimset{Y}{\yorzeron}$, 
then $d\times_{1} v\in \yordimset{X\times Y}{\yorbd{a}}$. 
\item\label{item:corp:2}
If $d\in \yordimsetq{X}{\yorbd{b}}$ and 
$v\in \yordimsetq{Y}{\yorzeroz}$, 
then $d\times_{1} v\in \yordimsetq{X\times Y}{\yorbd{b}}$. 
\item\label{item:corp:3}
If $d\in \yordimsetu{X}{S}{\yorbd{a}}$ and 
$v\in \yordimsetu{Y}{S}{\yorzeron}$, 
then we have $d\times_{\infty} v\in \yordimsetu{X\times Y}{S}{\yorbd{a}}$. 
\item\label{item:corp:4}
If $d\in \yordimsetqu{X}{S}{\yorbd{b}}$ and 
$v\in \yordimsetqu{Y}{S}{\yorzeroz}$, 
then we have  $d\times_{\infty} v\in \yordimsetqu{X\times Y}{S}{\yorbd{b}}$. 
\end{enumerate}
\end{cor}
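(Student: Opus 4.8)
The plan is to deduce each of the four equalities by sandwiching the relevant fractal dimension of the product between the upper bound furnished by Proposition \ref{prop:productdim} and a matching lower bound obtained from an isometrically embedded slice. I would treat all four items in parallel, since they differ only in which product metric is used and in whether the upper box dimension is prescribed.

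First I would extract the upper bounds. In the situation of \ref{item:corp:1}, the hypothesis $v\in\yordimset{Y}{\yorzeron}$ forces all four fractal dimensions of $(Y,v)$ to be $0$; feeding this into the first statement of Proposition \ref{prop:productdim} gives $\pdim(X\times Y, d\times_{1}v)\le a_{2}$, $\ubdim(X\times Y, d\times_{1}v)\le a_{3}$, and $\adim(X\times Y, d\times_{1}v)\le a_{4}$, while the second statement gives $\hdim(X\times Y, d\times_{1}v)\le a_{1}+\pdim(Y,v)=a_{1}$. Replacing $\times_{1}$ by $\times_{\infty}$ yields the analogous bounds in the ultrametric case \ref{item:corp:3}, and discarding the upper box dimension (which is not prescribed for the possibly unbounded metrics) handles \ref{item:corp:2} and \ref{item:corp:4}.

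Next I would supply the lower bounds. Fix any $y_{0}\in Y$ and consider the slice $X\times\{y_{0}\}\yosub X\times Y$. Because $v(y_{0},y_{0})=0$, for both $k=1$ and $k=\infty$ one has $(d\times_{k}v)((x,y_{0}),(x',y_{0}))=d(x,x')$, so the restriction of $d\times_{k}v$ to $(X\times\{y_{0}\})^{2}$ is isometric to $(X,d)$. Proposition \ref{prop:subsub} then yields $\mathbf{Dim}(X,d)\le\mathbf{Dim}(X\times Y, d\times_{k}v)$ for each relevant dimension, which is exactly the reverse of the inequalities above. Combining the two halves produces equality in every prescribed coordinate, establishing \ref{item:corp:1} and \ref{item:corp:2}.

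Finally, for the ultrametric items \ref{item:corp:3} and \ref{item:corp:4}, it remains to confirm that $d\times_{\infty}v\in\ult{X\times Y}{S}$: the $\ell^{\infty}$-product of two ultrametrics is again an ultrametric by the remark in Section \ref{sec:pre}, and since $d$ and $v$ take values in $S$ and the maximum of two elements of $S$ again lies in $S$, so does $d\times_{\infty}v$. I do not expect a genuine obstacle in this argument; the only points needing a moment's care are that $d\times_{1}v$ (resp.\ $d\times_{\infty}v$) is bounded in the bounded items \ref{item:corp:1} and \ref{item:corp:3}, which is immediate, and that in bounding $\hdim$ of the product one must invoke $\pdim(Y,v)=0$ rather than $\hdim(Y,v)$.
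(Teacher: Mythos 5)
Your proof is correct and matches the paper's intended argument: the paper states the corollary as an immediate consequence of Proposition \ref{prop:productdim}, with the lower bounds implicitly supplied exactly as you do, via the isometric slice $X\times\{y_{0}\}$ and Proposition \ref{prop:subsub}. Your explicit checks (boundedness of the product metric, $S$-valuedness and the ultrametric property of $d\times_{\infty}v$, and using $\pdim(Y,v)=0$ rather than $\hdim$ in the Hausdorff bound) are precisely the details the paper leaves to the reader.
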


\subsection{Extensors}

For a metric space $(X, d)$, 
we denote by $\yorcompset(X)$
the set of all complete metrics in $\met(X)$ . 
We also 
denote by $\yorpropset(X)$
the set of all proper metrics in $\met(X)$ .
For $\eta\in (0, \infty)$, 
a subset $A$ of a metric space $(X, d)$ is
\emph{$\eta$-dense} if for all $x\in X$, 
there exists $a\in A$ with $d(x, a)\le \eta$. 
Let $X$, $Y$, and $Z$ be sets. 
Let 
$f\colon X\to Y$  and 
$d\colon Y\times Y\to Z$ be maps. 
Then, we define a map
$f^{*}d\colon X\times X\to Z$ by 
$f^{*}d(x, y)=d(f(x), f(y))$. 
For a metrizable space $X$, and 
for metrics  $d, e\in \met(X)$, 
we also define $d\lor e\in \met(X)$
by $(d\lor e)(x, y)=d(x, y)\lor e(x, y)$. 

The next is our second main result:
\begin{thm}\label{thm:subARC}
Let $X$ be a metrizable space, 
and $F$ be a non-empty 
closed subset of $X$ with 
$\yotdim{X\setminus F}=0$. 
Let $\yorfmet\in \met(X)$ and 
we consider that $\yorqsp{X}{F}$ is 
equipped with the topology generated by 
$\yorind{\yorfmet}$. 
Let $\yorflatmap\colon X\to F\times \yorqsp{X}{F}$
be a topological embedding 
stated  in Theorem \ref{thm:main1}
associated with $\yorfmet$. 
Let $\yorvmet\in \met(\yorqsp{X}{F})$. 
Define a map  
$\yorsubmap\colon \met(F)\to \met(X)$
by $\yorsubmap(d)=\yorflatmap^{*}(d\times_{1}\yorvmet)$. 
Then the map $\yorsubmap$
 satisfies that 
\begin{enumerate}[label=\textup{(\arabic*)}]
\item\label{item:sub1:11}
for all $d\in \met(F)$, we have 
$\yorsubmap(d)|_{F^{2}}=d$; 
\item\label{item:sub1:22}
for all $d, e\in \met(F)$, 
we have 
$\metdis_{X}(\yorsubmap(d), \yorsubmap(e))
=\metdis_{F}(d, e)$ (i.e.,  $\yorsubmap$ is 
an isometric map); 
\item\label{item:sub1:33}
if $d, e\in \met(F)$ satisfies $d(a, b)\le e(a, b)$ for all 
$a, b\in F$,  then $\yorsubmap(d)(x, y)\le \yorsubmap(e)(x, y)$ for all $x, y\in X$;
\item\label{item:sub1:44}
for all $d, e\in \met(F)$
we have $\yorsubmap(d\lor e)=
\yorsubmap(d)\lor \yorsubmap(e)$. 
\end{enumerate}
In addition, 
the following statements are true: 
\begin{enumerate}[label=\textup{(A\arabic*)}]
\item\label{item:subARC:cm}
 If $X$ is completely metrizable, 
 then metrics 
 $\yorfmet\in \met(X)$ and 
$\yorvmet\in \met(\yorqsp{X}{F})$
can be chosen  so that 
$\yorsubmap(\yorcompset(F))\yosub 
\yorcompset(X)$. 
\item\label{item:subARC:loccpt}
If $X$ is $\sigma$-compact locally compact, 
then we can choose 
$\yorfmet\in \met(X)$ and 
$\yorvmet\in \met(\yorqsp{X}{F})$ 
so that 
$\yorqsp{X}{F}$ is compact and 
we have
$\yorsubmap(\yorpropset(F))\yosub\yorpropset(X)$. 
\item\label{item:subARC:sep}
If $X$ is separable, then
$\yorvmet\in \met(\yorqsp{X}{F})$ can be 
chosen  to satisfy that
for all $\yorbd{a}\in \yorlset$ and 
$\yorbd{b}\in \yorrset$, we have 
$\yorsubmap(\yordimset{F}{\yorbd{a}})
\yosub \yordimset{X}{\yorbd{a}}$
 and 
 $\yorsubmap(\yordimsetq{F}{\yorbd{b}})
\yosub \yordimsetq{X}{\yorbd{b}}$
\item\label{item:subARC:dense}
If we fix  $\eta \in (0, \infty)$, 
then we can choose $\yorvmet\in \met(\yorqsp{X}{F})$
so that  for all $d\in \met(F)$, 
the set $F$ is $\eta$-dense in 
$(X, \yorsubmap(d))$.
 
\end{enumerate}
\end{thm}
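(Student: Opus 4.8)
The plan is to reduce everything to the explicit formula
\[
\yorsubmap(d)(x, y) = d(\yorflatmapx(x), \yorflatmapx(y)) + \yorvmet(\yorqmap(x), \yorqmap(y)),
\]
which is immediate from $\yorsubmap(d) = \yorflatmap^{*}(d \times_{1} \yorvmet)$ and $\yorflatmap(x) = (\yorflatmapx(x), \yorqmap(x))$. First I would note that, since $\yorflatmap$ is a topological embedding (Theorem \ref{thm:main1}) and $d \times_{1} \yorvmet$ generates the product topology, $\yorsubmap(d)$ lies in $\met(X)$, and moreover $\yorflatmap$ is an isometry from $(X, \yorsubmap(d))$ onto the closed subspace $\yorflatmap(X)$ of $(F \times \yorqsp{X}{F}, d \times_{1} \yorvmet)$. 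The four algebraic properties follow directly from the displayed formula: \ref{item:sub1:11} uses $\yorflatmap(a) = (a, \yorppp{F})$ for $a \in F$ (condition \ref{item:main11}) together with $\yorvmet(\yorppp{F}, \yorppp{F}) = 0$; for \ref{item:sub1:22} the $\yorvmet$-terms cancel in $\yorsubmap(d) - \yorsubmap(e)$ and, since $\yorflatmapx$ maps $X$ onto $F$, the pairs $(\yorflatmapx(x), \yorflatmapx(y))$ exhaust $F \times F$, so the supremum computing $\metdis_{X}(\yorsubmap(d), \yorsubmap(e))$ equals $\metdis_{F}(d, e)$; and \ref{item:sub1:33}, \ref{item:sub1:44} are pointwise monotonicity and the fact that adding the common term $\yorvmet(\yorqmap(x), \yorqmap(y))$ commutes with the maximum.

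For the four remaining assertions I would repeatedly use that $(X, \yorsubmap(d))$ is isometric to the closed subspace $\yorflatmap(X)$ of the product, so that completeness, properness and the fractal dimensions are all governed by $(F \times \yorqsp{X}{F}, d \times_{1} \yorvmet)$. The density statement \ref{item:subARC:dense} is quickest: taking $a = \yorflatmapx(x) \in F$ gives $\yorsubmap(d)(x, \yorflatmapx(x)) = \yorvmet(\yorqmap(x), \yorppp{F})$, so any $\yorvmet$ with $\yorvmet(s, \yorppp{F}) \le \eta$ for every $s$ works; such a $\yorvmet$ is produced by truncating a compatible metric at height $\eta$. For \ref{item:subARC:sep}, when $X$ is separable the space $\yorqsp{X}{F}$ is separable (see \ref{item:26:sep} in Proposition \ref{prop:sepcpt}) and $0$-dimensional (Proposition \ref{prop:inducedzero}), so Lemma \ref{lem:nullzero} furnishes a single $\yorvmet$ lying in both $\yordimset{\yorqsp{X}{F}}{\yorzeron}$ and $\yordimsetq{\yorqsp{X}{F}}{\yorzeroz}$. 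Then \ref{item:corp:1} in Corollary \ref{cor:prodprod} gives $d \times_{1} \yorvmet \in \yordimset{F \times \yorqsp{X}{F}}{\yorbd{a}}$ for $d \in \yordimset{F}{\yorbd{a}}$; Proposition \ref{prop:subsub} applied to the subspace $\yorflatmap(X)$ yields all four dimensions $\le \yorbd{a}$, while the isometric copy $F \subset X$ (property \ref{item:sub1:11}) gives the reverse inequalities, so $\yorsubmap(d) \in \yordimset{X}{\yorbd{a}}$; the $\yordimsetq{}{}$ case is identical via \ref{item:corp:2} in Corollary \ref{cor:prodprod}.

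For completeness \ref{item:subARC:cm} I would pick $\yorfmet \in \met(X)$ complete (available because $X$ is completely metrizable) and set $\yorvmet = \yorind{\yorfmet}$, which is complete by Proposition \ref{prop:inducedcomplete}; then for $d \in \yorcompset(F)$ the product $d \times_{1} \yorvmet$ is complete, hence so is its closed subspace $\yorflatmap(X)$, i.e.\ $\yorsubmap(d) \in \yorcompset(X)$. For properness \ref{item:subARC:loccpt} I would instead arrange $\yorqsp{X}{F}$ to be compact: when $X$ is non-compact I take $\yorfmet$ to be the restriction to $X$ of a metric on the one-point compactification $\yorop{X}$ and apply Proposition \ref{prop:qlocpt}, while if $X$ is compact then $\yorqsp{X}{F}$ is compact by \ref{item:26:cpt} in Proposition \ref{prop:sepcpt}. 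With $\yorqsp{X}{F}$ compact every $\yorvmet \in \met(\yorqsp{X}{F})$ is proper, so for $d \in \yorpropset(F)$ Lemma \ref{lem:prodproper} makes $d \times_{1} \yorvmet$ proper, and its closed subspace $\yorflatmap(X)$ is again proper, giving $\yorsubmap(d) \in \yorpropset(X)$.

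I expect \ref{item:subARC:loccpt} to be the main obstacle. Every other part collapses to the single principle that $(X, \yorsubmap(d))$ is a closed subspace of a $\times_{1}$-product, but compactness of $\yorqsp{X}{F}$ is not automatic and cannot be obtained from an arbitrary $\yorfmet$; it forces a global choice of $\yorfmet$ through the compactification $\yorop{X}$ (and $\yorop{F}$ when $F$ is non-compact), and one must then check that this same $\yorfmet$ is the metric underlying the embedding $\yorflatmap$ of Theorem \ref{thm:main1}. Reconciling these simultaneous demands on $\yorfmet$ is the delicate point, and Proposition \ref{prop:qlocpt} is precisely the tool that makes it work.
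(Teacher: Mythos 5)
Your proposal is correct and follows essentially the same route as the paper's own proof: the explicit formula $\yorsubmap(d)(x,y)=d(r(x),r(y))+\yorvmet(\yorqmap(x),\yorqmap(y))$ for \ref{item:sub1:11}--\ref{item:sub1:44}, then Propositions \ref{prop:inducedcomplete}, \ref{prop:qlocpt}, Lemma \ref{lem:nullzero} with Corollary \ref{cor:prodprod} and Proposition \ref{prop:subsub}, and truncation of $\yorvmet$ for \ref{item:subARC:cm}--\ref{item:subARC:dense}, each time transferring the property through the closed isometric copy $\yorflatmap(X)$ of $X$ in the $\ell^{1}$-product. Your small deviations (taking $\yorvmet=\yorind{\yorfmet}$ concretely in \ref{item:subARC:cm}, treating compact $X$ separately via Proposition \ref{prop:sepcpt} in \ref{item:subARC:loccpt}, and spelling out the lower dimension bound via $F\yosub X$ in \ref{item:subARC:sep}) are only more explicit instances of the paper's argument, not a different method.
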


\begin{proof}
Put 
$\yorflatmap(x)=(\yorflatmapx(x), \yorflatmapy(x))$, where $r$ is a retraction stated in Theorem \ref{thm:engelking} associated with 
$\yorfmet\in \met(X)$, and $\yorqmap\colon X\to \yorqsp{X}{F}$ is the map defined in Section \ref{sec:pre}. 
In this setting, for all $d\in \met(X)$, 
we obtain 
$\yorsubmap(d)(x, y)=
d(r(x), r(y))+v(\yorqmap(x), \yorqmap(y))$. 
First we confirm that 
for all $d\in \met(F)$, we have 
$\yorsubmap(d)\in \met(X)$, 
which follows from the fact that 
$\yormainmap$ is a topological embedding and 
$d\times_{1}v\in \met(F\times \yorqsp{X}{F})$. 

The condition \ref{item:sub1:11}
is deduced 
from the condition  \ref{item:main11} in 
Theorem \ref{thm:main1}. 

To prove the condition \ref{item:sub1:22}, take
$d, e\in \met(F)$. 
Then by the definition of the $\ell^{1}$-product ``$\times_{1}$'', 
for all $x, y\in X$, 
 we have 
\begin{align*}
|\yorsubmap(d)(x, y)-\yorsubmap(e)(x, y)|
=|d(\yorflatmapx(x), \yorflatmapx(y))
-e(\yorflatmapx(x), \yorflatmapx(y))|
\end{align*}
Thus the property \ref{item:main11} implies the 
condition \ref{item:sub1:22}. 

The condition \ref{item:sub1:33} follows from 
the definition of the $\ell^{1}$-product ``$\times_{1}$''.
 
From the fact that
$(a+c)\lor (b+c)=(a\lor b)+c$ for all 
$a, b, c\in [0, \infty)$, 
we obtain the condition 
\ref{item:sub1:44}. 
Therefore  all the properties 
\ref{item:sub1:11}--\ref{item:sub1:44}
are satisfied. 
It remains to show the additional  statements
\ref{item:subARC:cm}--\ref{item:subARC:dense}. 

To show the statement \ref{item:subARC:cm}, 
assume that  $X$ is complete metrizable. 
Then we can choose a complete metric 
$\yorfmet\in \met(X)$.
In this situation, 
the space
 $\yorqsp{X}{F}$ is  complete metrizable
(see Proposition \ref{prop:inducedcomplete}). 
 Thus we can take 
$v\in \met(\yorqsp{X}{F})$ as a complete metric. 
If $d\in \yorcompset(F)$, then 
the condition \ref{item:main12} in 
Theorem \ref{thm:main1}
 implies that 
$\yorsubmap(d)=\yorflatmap^{*}(d\times_{1}v)$ is 
complete. 
Therefore the condition \ref{item:subARC:cm} is satisfied.

Next we prove the statement \ref{item:subARC:loccpt}. 
Assume that $X$ is $\sigma$-compact and locally compact. 
Choose $\yorfmet\in \met(X)$ 
such that there exists a $D\in \met(\yorop{X})$ with 
$D|_{X}=\yorfmet$. 
Then  Proposition \ref{prop:qlocpt} implies that 
$\yorqsp{X}{F}$ is  compact. 
Fix an arbitrary metric $\yorvmet\in \met(\yorqsp{X}{F})$. 
Take $d\in \yorpropset(X)$. 
From Proposition \ref{lem:prodproper}, 
the condition \ref{item:main12}, 
and
the fact that 
a closed metric subspace of a proper metric space is proper, 
it follows that $\yorsubmap(d)=\yorflatmap^{*}(d\times_{1}v)$ is a proper metric. 

To verify  the statement \ref{item:subARC:sep}, 
assume that $X$ is separable. 
In this case, the space  $\yorqsp{X}{F}$  is 
separable and $\yotdim{\yorqsp{X}{F}}=0$
(see the statement \ref{item:26:sep} in Proposition \ref{prop:sepcpt},  and Proposition 
\ref{prop:inducedzero}). 
Lemma \ref{lem:nullzero} enables us to 
choose $\yorvmet\in \met(X)$ so that 
$\yorvmet\in \yordimset{\yorqsp{X}{F}}{\yorzeron}$. 
In this setting, we also have 
$\yorvmet\in \yordimsetq{\yorqsp{X}{F}}{\yorzeroz}$. 
According to 
Corollary \ref{cor:prodprod} and 
Proposition \ref{prop:subsub}, 
we conclude that 
$\yorsubmap(\yordimset{F}{\yorbd{a}})
\yosub \yordimset{X}{\yorbd{a}}$
 and 
 $\yorsubmap(\yordimsetq{F}{\yorbd{b}})
\yosub \yordimsetq{X}{\yorbd{b}}$.

We next show the statement \ref{item:subARC:dense}. 
Fix $\eta\in (0, \infty)$. 
Then we can take $v\in \met(\yorqsp{X}{F})$ so that 
$v(x, y)\le \eta$ for all $x, y\in \yorqsp{X}{F}$ 
(if necessary, replace $v$ with $\min\{v, \eta\}$). 
Then,  for all $x\in X$, 
due to  the definition of $\yorsubmap$ and 
$r(r(x))=r(x)$,  we obtain 
$\yorsubmap(d)(x, r(x))
=d(\yorflatmapx(x), \yorflatmapx(x))+
v(\yorflatmapy(x), \yorflatmapy(r(x)))
=v(\yorflatmapy(x), \yorflatmapy(r(x)))\le \eta$. 
Since $r(x)\in F$,  
we conclude that  the set $F$ is $\eta$-dense in 
$(X, \yorsubmap(d))$. 
This complete the proof of 
Theorem \ref{thm:subARC}. 
\end{proof}

For an ultrametrizable space $X$ and 
for a characteristic subset $S$ of $[0, \infty)$, 
we  put 
$\yorcompsetu(X; S)=\yorcompset(X)\cap 
\ult{X}{S}$ and 
$\yorpropsetu(X; S)=
\yorpropset(X)\cap \ult{X}{S}$. 

As an ultrametric version of Theorem \ref{thm:subARC}, 
we obtain 
 our third main result:
\begin{thm}\label{thm:subNARC}
Let $X$ be an ultrametrizable space, 
and $F$ be a non-empty 
closed subset of $X$.  
Let $\yorfmet\in \met(X)$ and 
we consider that $\yorqsp{X}{F}$ is 
equipped with the topology generated by 
$\yorind{\yorfmet}$. 
Let $\yorflatmap\colon X\to F\times \yorqsp{X}{F}$
be a topological embedding
stated 
 in Theorem \ref{thm:main1}. 
 Fix   a characteristic subset
 $S$ of 
$[0, \infty)$,  and 
let $\yorvmet\in \ult{\yorqsp{X}{F}}{S}$. 
Define a map  
$\yorsubmapq\colon \met(F)\to \met(X)$
by $\yorsubmapq(d)=\yorflatmap^{*}(d\times_{\infty}\yorvmet)$. 
Then the map
$\yorsubmapq$ satisfies
that 
\begin{enumerate}[label=\textup{(\arabic*)}]
\item\label{item:sub2:11}
for all $d\in \met(F)$, we have 
$\yorsubmapq(d)|_{F^{2}}=d$; 
\item\label{item:sub2:22}
for all $d, e\in \met(F)$, we have
$\umetdis_{X}^{S}(\yorsubmapq(d), \yorsubmapq(e))
=\umetdis_{F}^{S}(d, e)$
(i.e., $\yorsubmapq$ is an isometric map); 
\item\label{item:sub2:33}
if $d, e\in \met(F)$ satisfy $d(a, b)\le e(a, b)$ for all 
$a, b\in F$,  then 
$\yorsubmapq(d)(x, y)\le \yorsubmapq(e)(x, y)$
for all $x, y\in X$;
\item\label{item:sub2:44}
for all $d, e\in\met(F)$
we have $\yorsubmapq(d\lor e)=
\yorsubmapq(d)\lor \yorsubmapq(e)$;
\item\label{item:sub2:55}
we have $\yorsubmapq(\ult{F}{S})\yosub 
\ult{X}{S}$. 
\end{enumerate}

In addition, 
the following statements are true: 
\begin{enumerate}[label=\textup{(N\arabic*)}]
\item\label{item::cm}
 If $X$ is completely metrizable, 
 then
we can choose $\yorfmet\in \met(X)$ and 
$\yorvmet\in \met(\yorqsp{X}{F})$ 
so that 
$\yorsubmapq(\yorcompset(F))\yosub
\yorcompset(X)$. In this case, we also have 
$\yorsubmapq(\yorcompsetu(F; S)\yosub
\yorcompsetu(X; S)$. 
\item\label{item::loccpt}
If $X$ is $\sigma$-compact locally compact, 
then we can choose 
$\yorfmet\in \met(X)$
and 
$\yorvmet\in \met(\yorqsp{X}{F})$
so that 
$\yorqsp{X}{F}$ is compact and we have 
$\yorsubmapq(\yorpropset(F))\yosub
\yorpropset(X)$. 
In this case, we also have 
$\yorsubmapq(\yorpropsetu(F; S))
\yosub \yorpropsetu(X; S)$. 
\item\label{item::sep}
If $X$ is separable, then 
we can choose $v\in \met(\yorqsp{X}{F})$
such that
for all $\yorbd{a}\in \yorlset$ and 
$\yorbd{b}\in \yorrset$, we have 
$\yorsubmapq(\yordimset{F}{\yorbd{a}})
\yosub \yordimset{X}{\yorbd{a}}$
 and 
 $\yorsubmapq(\yordimsetq{F}{\yorbd{b}})
\yosub \yordimsetq{X}{\yorbd{b}}$. 
In this setting, we also notice the inclusions  
that
$\yorsubmapq(\yordimsetu{F}{S}{\yorbd{a}})
\yosub \yordimsetu{X}{S}{\yorbd{a}}$
 and 
 $\yorsubmapq(\yordimsetqu{F}{S}{\yorbd{b}})
\yosub \yordimsetqu{X}{S}{\yorbd{b}}$. 
\item\label{item::dense}
If we fix  $\eta \in (0, \infty)$, 
then we can choose $\yorvmet\in \met(\yorqsp{X}{F})$
so that for all $d\in \met(F)$, the set 
$F$ is $\eta$-dense  in 
$(X, \yorsubmapq(d))$. 
 
\end{enumerate}
\end{thm}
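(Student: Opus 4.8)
Since $X$ is ultrametrizable, every subspace of $X$ is zero-dimensional; in particular $\yotdim{X\setminus F}=0$, so Theorems \ref{thm:engelking} and \ref{thm:main1} apply and the embedding $\yorflatmap$ exists. The plan is to run the proof of Theorem \ref{thm:subARC} with the $\ell^{1}$-product replaced by the $\ell^{\infty}$-product and every $+$ replaced by $\lor$. Writing $\yorflatmap(x)=(\yorflatmapx(x),\yorflatmapy(x))$ with $\yorflatmapx=r$ the retraction of Theorem \ref{thm:engelking} and $\yorflatmapy=\yorqmap$, we have
\[
\yorsubmapq(d)(x,y)=d(r(x),r(y))\lor\yorvmet(\yorqmap(x),\yorqmap(y)),
\]
and $\yorsubmapq(d)\in\met(X)$ since $\yorflatmap$ is an embedding and $d\times_{\infty}\yorvmet\in\met(F\times\yorqsp{X}{F})$. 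Item \ref{item:sub2:11} follows from \ref{item:main11} together with $\yorvmet(\yorppp{F},\yorppp{F})=0$; items \ref{item:sub2:33} and \ref{item:sub2:44} are formal, using monotonicity of $\lor$ and $(a\lor c)\lor(b\lor c)=(a\lor b)\lor c$, exactly as in Theorem \ref{thm:subARC}.

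The only genuinely new algebraic points are \ref{item:sub2:22} and \ref{item:sub2:55}. For \ref{item:sub2:22} I would first show, for each $\epsilon\in S\setminus\{0\}$, that the inequality $\yorsubmapq(d)(x,y)\le\yorsubmapq(e)(x,y)\lor\epsilon$ holds for all $x,y\in X$ if and only if $d(a,b)\le e(a,b)\lor\epsilon$ holds for all $a,b\in F$: the forward direction takes $x=a$, $y=b\in F$ and uses \ref{item:sub2:11}, while the converse applies $d(r(x),r(y))\le e(r(x),r(y))\lor\epsilon$ (valid since $r(x),r(y)\in F$) and then adjoins the common term $\yorvmet(\yorqmap(x),\yorqmap(y))$ to both sides under $\lor$. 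With its symmetric counterpart this shows the two sets of admissible $\epsilon$ coincide, giving $\umetdis_{X}^{S}(\yorsubmapq(d),\yorsubmapq(e))=\umetdis_{F}^{S}(d,e)$. For \ref{item:sub2:55}, if $d\in\ult{F}{S}$ then $d\times_{\infty}\yorvmet$ is an ultrametric (the remark on $\times_{\infty}$ in Section \ref{sec:pre}) whose values, being maxima of two elements of $S$, lie in $S$; pulling back by the arbitrary map $\yorflatmap$ preserves the strong triangle inequality, so $\yorsubmapq(d)\in\ult{X}{S}$.

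For \ref{item::cm}--\ref{item::dense} I mirror \ref{item:subARC:cm}--\ref{item:subARC:dense}. In \ref{item::cm} take $\yorfmet$ complete, so $\yorqsp{X}{F}$ is completely metrizable (Proposition \ref{prop:inducedcomplete}) and zero-dimensional (Proposition \ref{prop:inducedzero}), and choose $\yorvmet$ to be a \emph{complete} ultrametric in $\ult{\yorqsp{X}{F}}{S}$; then $d\times_{\infty}\yorvmet$ is complete and, as $\yorflatmap(X)$ is closed by \ref{item:main12}, so is $\yorsubmapq(d)$. In \ref{item::loccpt} take $\yorfmet$ extendable to $\yorop{X}$, so $\yorqsp{X}{F}$ is compact (Proposition \ref{prop:qlocpt}) and any $\yorvmet\in\ult{\yorqsp{X}{F}}{S}$ is proper; Lemma \ref{lem:prodproper}, \ref{item:main12}, and the fact that a closed subspace of a proper space is proper yield properness of $\yorsubmapq(d)$. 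The ultrametric refinements in \ref{item::cm} and \ref{item::loccpt} then follow by intersecting with \ref{item:sub2:55}. For \ref{item::sep} I choose $\yorvmet\in\yordimsetu{\yorqsp{X}{F}}{S}{\yorzeron}$; the inclusions into $\yordimset{X}{\yorbd{a}}$ and $\yordimsetq{X}{\yorbd{b}}$ come from Proposition \ref{prop:productdim} (upper bounds, since $\yorvmet$ has all dimensions $0$) and Proposition \ref{prop:subsub} applied to $(F,d)$ as a subspace of $(X,\yorsubmapq(d))$ and to $(X,\yorsubmapq(d))$ as an isometric copy of $\yorflatmap(X)\yosub(F\times\yorqsp{X}{F},d\times_{\infty}\yorvmet)$ (lower bounds); the refinement to $\yordimsetu{X}{S}{\yorbd{a}}$ and $\yordimsetqu{X}{S}{\yorbd{b}}$ again uses \ref{item:sub2:55}. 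Finally \ref{item::dense} follows by replacing $\yorvmet$ with $\min\{\yorvmet,s\}$ for some $s\in S\setminus\{0\}$ with $s\le\eta$ (such $s$ exists since $S$ is characteristic, and the truncation remains an $S$-valued ultrametric), so that $\yorsubmapq(d)(x,r(x))=\yorvmet(\yorqmap(x),\yorqmap(r(x)))\le\eta$.

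The main obstacle lies not in these deductions but in the two existence inputs they require: a complete ultrametric in $\ult{\yorqsp{X}{F}}{S}$ for \ref{item::cm}, and an $S$-valued ultrametric on $\yorqsp{X}{F}$ with all four fractal dimensions equal to $0$ for \ref{item::sep} (an ultrametric, $S$-valued analogue of Lemma \ref{lem:nullzero}). The former should follow from the complete metrizability of $\yorqsp{X}{F}$ via a de Groot-type theorem refined to characteristic value sets as in Proposition \ref{prop:S-ult} (cf.\ \cite{MR80905}); the latter from embedding the separable space $\yorqsp{X}{F}$ into a Cantor set (Proposition \ref{prop:sepembzero}) carrying an $S$-valued ultrametric of Assouad dimension $0$ (cf.\ \cite{Ishiki2019}) and restricting. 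Verifying these two statements in the $S$-valued ultrametric category is where the real work sits.
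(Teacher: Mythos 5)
Your proposal is correct and follows essentially the same route as the paper, whose proof of Theorem \ref{thm:subNARC} consists of repeating the proof of Theorem \ref{thm:subARC} with $\times_{\infty}$ and $\lor$ in place of $\times_{1}$ and $+$, exactly as you do. The two existence inputs you flag as ``the real work'' are precisely the paper's citation-level ingredients: the complete $S$-valued ultrametric on $\yorqsp{X}{F}$ is \cite[Proposition 2.17]{Ishiki2021ultra}, which the paper's proof explicitly invokes for \ref{item::cm}, and the $S$-valued ultrametric with all fractal dimensions zero is the ultrametric analogue of Lemma \ref{lem:nullzero}, obtained via Proposition \ref{prop:sepembzero} and \cite{Ishiki2019} just as you sketch.
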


\begin{proof}
Theorem \ref{thm:subNARC} can be prove by 
a similar method to the proof of 
Theorem \ref{thm:subARC}. 
Recall Proposition \ref{prop:S-ult} and 
remark that 
to prove the statement \ref{item::cm},
we need to use  
\cite[Proposition 2.17]{Ishiki2021ultra} stating 
that $\yorcompsetu(Y; S)\neq \emptyset$
 if and only if 
$\yorcompset(Y)\neq \emptyset$ for every 
ultrametrizable space $Y$. 
\end{proof}

\begin{rmk}
Theorem \ref{thm:subNARC} contains 
the following author's theorems:
\begin{enumerate}
\item The extension theorem 
\cite[Theorem 1.2]{Ishiki2021ultra}
of ultrametrics. 
\item The simultaneous extension theorem \cite[Theorem 1.3]{ishiki2022highpower} of ultrametrics.
\item 

The extension theorem \cite[Theorem 3.15]{Ishiki2022proper} of proper ultrametrics.
\item 
The theorem
\cite[Theorem 4.9]{Ishiki2022proper} on the existence of an extended proper 
ultrametric which makes a given closed subset $\eta$-dense. 
\end{enumerate}
Moreover, the statements 
\ref{item:subARC:sep} and \ref{item::sep} 
can be considered as 
improvements of  \cite[Theorem 3]{stasyuk2009continuous}. 

\end{rmk}
\begin{rmk}
Theorems \ref{thm:main1} 
and \ref{thm:subNARC}
and 
the statements appearing in the proofs of them 
are still true for 
metrics taking values in 
general linearly ordered Abelian groups. 
This observation  gives another proof 
of \cite[Theorem 1.2]{ishiki2022highpower}. 
For the existence of retractions with the condition \ref{item:r22}, we can use \cite[Theorem 3.1]{ishiki2022highpower}, which is an analogue of
Brodskiy--Dydak--Higes--Mitra
\cite[Theorem 2.9]{brodskiy2007dimension}
(see Remark \ref{rmk:ret} in the present paper).  
\end{rmk}

\begin{rmk}
In Theorem \ref{thm:subARC}, 
the statements
\ref{item:subARC:cm}--\ref{item:subARC:dense}
except the pair of  \ref{item:subARC:cm} and 
\ref{item:subARC:sep}
 are compatible with each other.
For example, 
if we fix $\eta\in (0, \infty)$ and  $X$ is completely metrizable, then we can choose 
$\yorvmet\in \met(\yorqsp{X}{F})$ so that 
$\yorsubmap$   satisfies 
the conclusions of \ref{item:subARC:cm} and \ref{item:subARC:dense}. 
On the other hand, 
the author does not know whether
there exists an extensor 
$\yorsubmap$ satisfying  the conclusions of 
 \ref{item:subARC:cm} and \ref{item:subARC:sep}
  under the assumption that  
$X$ is separable and completely metrizable.
In general, we can not take a complete metric 
$v$ such that $v\in\yordimset{X}{\yorzeron}$. 
The existence of such a metric is equivalent to 
the $\sigma$-compactness and 
the local compactness of 
$\yorqsp{X}{F}$. 
(note that all doubling complete metric spaces are proper). 
Similarly, 
in Theorem \ref{thm:subNARC}, 
the statements 
\ref{item::cm}--\ref{item::dense}
except the pair of \ref{item::cm} and 
\ref{item::sep}
 are compatible with each other.
\end{rmk}

\begin{ac}
The author would like to thank the referee for 
helpful comments and suggestions. 
\end{ac}

\bibliographystyle{plain}
\bibliography{bibtex/fct.bib}

\end{document}